\newtheorem{theorem}{Theorem}[section]
\newtheorem{lemma}[theorem]{Lemma}
\newtheorem{proposition}[theorem]{Proposition}
\newtheorem{remark}[theorem]{Remark}
\newtheorem{corollary}[theorem]{Corollary}
\newtheorem{definition}[theorem]{Definition}
\begin{document}

\title{\Large \textbf{A note on the universal supersingular quotients of $U(2,1)$}}
\date{}
\author{Peng Xu}

\maketitle

\begin{abstract}
Let $G$ be the unramified unitary group $U(2, 1)(E/F)$ defined over a non-archimedean local field $F$ of residue characteristic $p\neq 2$. In this note, we prove the universal supersingular quotients of $G$ are not irreducible in general.
\end{abstract}

\section{Introduction}\label{sec: intro}

Let $G$ be the unramified unitary group $U(2, 1)(E/F)$ defined over a non-archimedean locally compact field $F$ of residue characteristic $p\neq 2$, and let $K$ be a maximal compact open subgroup of $G$. For an irreducible smooth $\overline{\mathbf{F}}_p$-representation $\pi$ of $G$, and a weight $\sigma$ of $K$ contained in $\pi$, the space
\begin{center}
$\text{Hom}_G (\text{ind}^G _K \sigma, \pi)$
\end{center}
is a right module over the spherical Hecke algebra $\mathcal{H}(K, \sigma): =\text{End}_G (\text{ind}^G _K \sigma)$. In \cite[Theorem 1.1]{X2018}, we proved the above space admits eigenvectors for $\mathcal{H}(K, \sigma)$. As the algebra $\mathcal{H}(K, \sigma)$ is isomorphic to $\overline{\mathbf{F}}_p [T_\sigma]$ for some $T_\sigma \in \mathcal{H}(K, \sigma)$, the representation $\pi$ is a quotient of $\text{ind}^G _K \sigma /(T_\sigma -\lambda)$ for some $\lambda \in \overline{\mathbf{F}}_p$. In the case of $\lambda=0$ the representation $\text{ind}^G _K \sigma /(T_\sigma)$ is usually called the universal supersingular quotient of $G$. In this note, we prove the following result for such representations.

\begin{theorem}(Corollary \ref{reducibility of supersingular quotient}, Corollary \ref{reducibility of supersingular quotient: the case K=K_1 and q=p})\label{main theorem: intro}
Suppose $K$ is special but non-hyperspecial, and the size of the residue field of $F$ is $p$. For any weight $\sigma$ of $K$, the universal supersingular quotient $\textnormal{ind}^G _K \sigma /(T_\sigma)$ is not irreducible.
\end{theorem}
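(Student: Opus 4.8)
The plan is to work geometrically with the action of $G$ on its Bruhat--Tits tree $\mathfrak{T}$, whose two types of vertices correspond to the two conjugacy classes of special maximal compact subgroups; I fix $K$ to be the non-hyperspecial one, with base vertex $x_0$. First I would realize $\text{ind}^G_K \sigma$ as the space of finitely supported $\sigma$-valued functions on the orbit $G x_0$, and make the generator $T_\sigma$ of $\mathcal{H}(K,\sigma)\cong\overline{\mathbf{F}}_p[T_\sigma]$ completely explicit: it is the Hecke operator attached to the single nontrivial $K$-double coset, so that on the standard generator $f_0=[1,v]$ it produces a weighted sum of translates supported on the nearest vertices of $Gx_0$ other than $x_0$ itself, i.e.\ those lying at distance $2$ in $\mathfrak{T}$. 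Passing to $\pi:=\text{ind}^G_K\sigma/(T_\sigma)$ then imposes precisely the relation $T_\sigma f_0=0$ together with its $G$-translates, and since $\pi$ is cyclic on $\bar f_0$, proving reducibility amounts to exhibiting a nonzero vector whose $G$-span is proper, equivalently a $G$-morphism out of $\pi$ that is not an isomorphism.

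The second step is to bring in the adjacent hyperspecial vertex $x_1$, with stabilizer $K'$ and Iwahori $I=K\cap K'$, and to couple the two universal supersingular quotients. Concretely, I would choose a weight $\sigma'$ of $K'$ occurring in $\sigma|_I$ and use \cite[Theorem~1.1]{X2018}: the space $\mathrm{Hom}_G(\text{ind}^G_{K'}\sigma', \pi)$ carries Hecke eigenvectors, and the supersingular value $0$ produces a nonzero $G$-morphism $\Phi\colon \text{ind}^G_{K'}\sigma'/(T_{\sigma'})\to\pi$. Its image is then a nonzero $G$-submodule of $\pi$, and $\pi$ is reducible as soon as $\Phi$ fails to be an isomorphism. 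The mechanism behind this failure is that the single relation $T_\sigma\bar f_0=0$ simultaneously mixes all the branches emanating from the several type-$1$ neighbours of $x_0$, whereas the image of $\Phi$ is governed by the finite induction $\text{ind}^{K'}_I(\sigma|_I)$ and sees only the branch through $x_1$; the translated relations are too coarse to propagate a single branch across the whole tree, so $\Phi$ is not surjective.

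The heart of the matter, and where the hypotheses enter, is the finite-group computation over the residue field, carried out uniformly in $\sigma$. When $K$ is non-hyperspecial the reductive quotient $\overline{K}$ is not the unitary group attached to the hyperspecial point but the smaller group appearing at that vertex, and when moreover $q=p$ the multiplicities controlling $\Phi$ degenerate: the decomposition of $\text{ind}^{K'}_I(\sigma|_I)$ and of the relevant Jacquet-type modules over $\overline{K'}$ acquires an extra constituent (or an extra invariant line) compared with the hyperspecial case or with $q>p$. This is exactly the rigidity failure that is absent for $GL_2(\mathbf{Q}_p)$, and it is what lets the submodule $\Phi(\text{ind}^G_{K'}\sigma'/(T_{\sigma'}))$ be a proper part of $\pi$. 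I expect this finite computation --- pinning down, uniformly in the weight $\sigma$, the constituents of these induced and Jacquet modules at $q=p$ --- to be the main obstacle, since the statement holds for \emph{every} weight and so cannot rely on any special feature of an individual $\sigma$; everything must be read off from the structure of $\overline{K}$ and $\overline{K'}$ at $q=p$. The two cited corollaries should correspond to organizing this verification into the general special non-hyperspecial case and the residual case $K=K_1$ with $q=p$.
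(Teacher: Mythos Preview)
Your plan diverges from the paper in two structural ways, and the second of these is a genuine gap.

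First, the paper never brings in the other maximal compact $K'$. All intertwining maps used are between compact inductions from the \emph{same} $K$: for the ``degenerate'' weights $\sigma\in\{1,st\}$ (more generally $\chi_\sigma=\chi_\sigma^s$) one constructs explicit nonzero $G$-maps $\text{ind}^G_K st\to\text{ind}^G_K 1/(T+1)$ and $\text{ind}^G_K 1\to\text{ind}^G_K st/(T)$ by hand, while for the ``regular'' weights ($\chi_\sigma\neq\chi_\sigma^s$) one shows that $\text{ind}^G_K\sigma/(T)$ contains the conjugate weight $\sigma^s$ and hence admits a map from $\text{ind}^G_K\sigma^s$. In either case the key computation is done on the explicit basis $\{f_n\}_{n\in\mathbb{Z}}$ of $(\text{ind}^G_K\sigma)^{I_{1,K}}$ via two $I_{1,K}$-equivariant operators $S_K$ and $S_-$; one exhibits a specific nonzero $I_{1,K}$-invariant (namely $\bar f_1$ or $\overline{f_0+f_1}$) in the universal quotient and shows it is killed by the intertwiner, hence cannot generate. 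The hypotheses $K=K_1$ and $q=p$ enter only in the regular case, through the fact that the finite parabolic induction $\text{Ind}^{\Gamma_K}_{\mathbb{B}}\chi_\sigma$ is uniserial of length two with socle $\sigma^s$; this is what forces $\sigma^s$ to appear in the quotient. So the two corollaries are split by the dichotomy $\chi_\sigma=\chi_\sigma^s$ versus $\chi_\sigma\neq\chi_\sigma^s$, not by ``general non-hyperspecial'' versus ``$K=K_1$, $q=p$'' as you surmise.

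Second, and more seriously, your proposed mechanism for non-surjectivity is not a proof. Saying that ``the translated relations are too coarse to propagate a single branch across the whole tree'' is an intuition, and for $GL_2(\mathbb{Q}_p)$ the same intuition would be equally plausible yet the conclusion is false. You need a concrete witness that the image of $\Phi$ is proper, and your outline does not provide one. The paper's device---finding an explicit pro-$p$-Iwahori fixed vector and a nonzero $G$-map out of the universal quotient that annihilates it---is exactly such a witness, and its construction is delicate: as the paper itself notes, the analogous functions do not exist for $GL_2(F)$. Your appeal to \cite{X2018} also needs care: that result gives Hecke eigenvectors in $\mathrm{Hom}_G(\text{ind}^G_{K'}\sigma',\pi)$, but you must still argue that the eigenvalue is $0$ and that the resulting map is genuinely nonzero on the quotient by $T_{\sigma'}$, neither of which is automatic from the statement you cite.
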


 We prove Theorem \ref{main theorem: intro} by finding a pro-$p$-Iwahori invariant function from the maximal compact induction and showing that its non-zero image in the universal quotient does not generate the representation. We expect the theorem still holds without the restriction (on the group $K$ and and the size of the residue field) in the statement; indeed they are only used in Corollary \ref{reducibility of supersingular quotient: the case K=K_1 and q=p}.

To the author's knowledge, the universal supersingular quotients have only been extensively studied for the group $GL_2 (F)$ (\cite{Mor12},\cite{Mor13},\cite{Sch14},\cite{Yot19}). Besides the definition (\cite{AHHV17a}), very little is known about them in the general case. It is interesting to see whether our approach works for other groups, and our guess is that the function we found (to make the idea work) is very specific to the group itself: for example some quick computation indicates that the analogue of such functions does not exist for the group $GL_2 (F)$.

\section{Notations and Preliminary}\label{sec: notations}

\subsection{Notations}

Let $F$ be a non-archimedean local field of odd residue characteristic $p$, with ring of integers $\mathfrak{o}_F$ and maximal ideal $\mathfrak{p}_F$, and let $k_F$ be its residue field of cardinality $q=p^f$. Fix a separable closure $F_s$ of $F$. Let $E$ be the unramified quadratic extension of $F$ in $F_s$. We use similar notations $\mathfrak{o}_E$, $\mathfrak{p}_E$, $k_E$ for analogous objects of $E$. Fix a uniformizer $\varpi_{E}$ of $E$.

Equip $E^3$ with the non-degenerate Hermitian form h:
\begin{center}
 $\text{h}:~E^3 \times E^3 \rightarrow E$,
$(v_{1}, v_{2}) \mapsto ~v_{1}^{\text{T}}\beta \overline{v_2}, v_{1}, v_{2}\in E^3$.
\end{center}
Here, $-$ denotes the non-trivial Galois conjugation on $E/F$, and
$\beta$ is the matrix
\[ \begin{matrix}\begin{pmatrix} 0  & 0 & 1  \\ 0  & 1 & 0\\
1 & 0 & 0
\end{pmatrix}
\end{matrix}. \]
The unitary group $G$ is defined as:

\begin{center}
$G=\{g\in \text{GL}(3, E)\mid \text{h}(gv_1, gv_2)= \text{h}(v_1, v_2), \forall v_1, v_2\in E^3\}.$
\end{center}

Let $B=HN$ (resp, $B'= HN'$) be the subgroup of upper (resp, lower) triangular matrices of $G$, where $N$ (resp, $N'$) is the unipotent radical of $B$ (resp, $B'$) and $H$ is the diagonal subgroup of $G$. Denote an element of the following form in $N$ and $N'$ by $n(x, y)$ and $n'(x, y)$ respectively:
\begin{center}
$\begin{pmatrix}  1 & x & y  \\ 0 & 1 & -\bar{x}\\
0 & 0 & 1
\end{pmatrix}$, ~
$\begin{pmatrix}  1 & 0 & 0   \\ x & 1 & 0\\
y & -\bar{x} & 1
\end{pmatrix}$,
\end{center}
where $(x, y)\in E^2$ satisfies $x\bar{x}+ y+ \bar{y}=0$. For any $k\in \mathbb{Z}$, denote by $N_k$ (resp, $N'_k$) the subgroup of $N$ (resp, $N'$) consisting of all $n(x, y)$ (resp, $n'(x, y)$) with $y\in \mathfrak{p}^{k}_E$. For $x\in E^\times$, denote by $h(x)$ an element in $H$ of the following form:
\begin{center}
$\begin{pmatrix}  x & 0 & 0  \\ 0 & -\bar{x}x^{-1} & 0\\
0 & 0 & \bar{x}^{-1}
\end{pmatrix}.$
\end{center}

We record the following useful identity in $G$: for $y\neq 0$,
\begin{equation}\label{useful identity}
\beta n(x, y)= n(\bar{y}^{-1}x, y^{-1})\cdot h(\bar{y}^{-1})\cdot n'(-\bar{y}^{-1}\bar{x}, y^{-1}).
\end{equation}

\medskip
Up to conjugacy, the group $G$ has two maximal compact open subgroups $K_0$ and $K_1$, given by:
\begin{center}
$K_0= \begin{pmatrix}  \mathfrak{o}_E & \mathfrak{o}_E & \mathfrak{o}_E  \\ \mathfrak{o}_E  & \mathfrak{o}_E & \mathfrak{o}_E\\
\mathfrak{o}_E & \mathfrak{o}_E & \mathfrak{o}_E
\end{pmatrix}\cap G, ~K_1= \begin{pmatrix}  \mathfrak{o}_E & \mathfrak{o}_E & \mathfrak{p}^{-1}_E  \\ \mathfrak{p}_E  & \mathfrak{o}_E & \mathfrak{o}_E\\
\mathfrak{p}_E & \mathfrak{p}_E & \mathfrak{o}_E
\end{pmatrix}\cap G$.
\end{center}

Let $\alpha$ be the following diagonal matrix in $G$:
\[ \begin{matrix}\begin{pmatrix} \varpi_{E}^{-1}  & 0 & 0  \\ 0  & 1 & 0\\
0 & 0 & \varpi_{E}
\end{pmatrix}
\end{matrix} ,\]
and put $\beta'=\beta \alpha^{-1}$. Note that $\beta\in K_0$ and $\beta'\in K_1$. We use $\beta_K$ to denote the unique element in $K\cap \{\beta, \beta'\}$.

 Let $K\in\{K_0, K_1 \}$, and $K^1$ be the maximal normal pro-$p$ subgroup of $K$. The finite group $\Gamma_K= K/K^1$ may be identified with the $k_F$-points of an algebraic group defined over $k_F$, denoted also by $\Gamma_K$: when $K$ is $K_0$, $\Gamma_K$ is $U(2, 1)(k_E /k_F)$, and when $K$ is $K_1$, $\Gamma_K$ is $U(1, 1)\times U(1)(k_E /k_F)$. Let $\mathbb{B}$ be the upper triangular subgroup of $\Gamma_K$, and $\mathbb{U}$ be its unipotent radical. The Iwahori subgroup $I_K$ and pro-$p$ Iwahori subgroup $I_{1,K}$ in $K$ are the preimages of $\mathbb{B}$ and $\mathbb{U}$ in $K$.

Put $H_0= H\cap I_K$, and $H_1= H\cap I_{1, K}$. As $H_0 /H_1 \cong I_K / I_{1, K}$, we will identify the characters of these groups. For a character $\chi$ of $H_0$, i.e., a character of $H_0 /H_1$, denote by $\chi^s$ the character given by $\chi^s (h):= \chi (\beta_K h \beta^{-1}_K)$.

\smallskip

Denote by $n_K$ and $m_K$ the unique integers such that $N\cap I_{1, K}= N_{n_K}$ and $N'\cap I_{1, K}=N'_{m_K}$. Note that the coset spaces $N_{n_K}/N_{n_K +1}$ and $N'_{m_K}/N'_{m_K +1}$ are finite groups of order respectively $q^{t_K}$ and $q^{4-t_K}$, where $t_K= 3$ or $1$, depending on $K$ is $K_0$ or $K_1$.

The following group:
\begin{center}
$L_{q^3} := \{(x, t)\in k^2 _E \mid x\bar{x} + t +\bar{t}=0\}$,
\end{center}
and its central subgroup:
\begin{center}
$L_q := \{(0, t)\in k^2 _E\mid t +\bar{t}=0\}$.
\end{center}
will be used later. Here, the group structure of $L_{q^3}$ is given by
\begin{center}
$(x, t) \cdot (x', t') := (x+x', t+ t'- x' \bar{x}).$
\end{center}
We note that $\mid L_{q^3} \mid=q^3$ and that $\mid L_q \mid= q$.

We may identify these groups naturally:
\begin{center}
$L_{n_K}: N_{n_K}/N_{n_K +1} \cong L_{q^{t_K}}$

$n(x, \varpi^{n_K} _E t) \mapsto (x\varpi^{-n_K}_E , t) (\text{mod}~\mathfrak{p}_E)$
\end{center}

\begin{center}
$L_{m_K}: N'_{m_K}/N'_{m_K +1}\cong L_{q^{4-t_K}}$

$n'(\varpi_E x, \varpi^{m_K} _E t) \mapsto (x\varpi^{2-m_K}_E, t)(\text{mod}~\mathfrak{p}_E)$
\end{center}
Here, the elements $x$ and $t$ on the left hand side lie in $\mathfrak{o}_E$.

We fix a non-zero element $\mathfrak{t}\in \mathfrak{o}^\times _E$ with trace zero.

All the representations of $G$ and its subgroups considered in this paper are smooth over $\overline{\mathbf{F}}_p$.

\subsection{The spherical Hecke algebra $\mathcal{H}(K, \sigma)$}\label{subsec: spherical hecke algebra}

Let $K$ be a maximal compact open subgroup of $G$, and $(\sigma, W)$ be an irreducible smooth representation of $K$. As $K^1$ is pro-$p$ and normal, $\sigma$ factors through the finite group $\Gamma_K= K/K^1$, i.e., $\sigma$ is the inflation of an irreducible representation of $\Gamma_K$. Conversely, any irreducible representation of $\Gamma_K$ inflates to an irreducible smooth representation of $K$. We may therefore identify irreducible smooth representations of $K$ with irreducible representations of $\Gamma_K$, and we shall call them \emph{weights} of $K$ or $\Gamma_K$ from now on. It is known that $\sigma^{I_{1,K}}$ and $\sigma_{I'_{1,K}}$ are both one-dimensional, and that the natural composition map $\sigma^{I_{1,K}}\hookrightarrow \sigma \twoheadrightarrow \sigma_{I'_{1,K}}$ is an isomorphism of vector spaces (\cite[Theorem 6.12]{C-E2004}). Denote by $j_\sigma$ the inverse of the map aforementioned. For $v\in \sigma^{I_{1,K}}$, we have $j_\sigma (\bar{v})= v$, where $\bar{v}$ is the image of $v$ in $\sigma_{I'_{1,K}}$. By composition, we view $j_\sigma$ as a map in $\text{End}_{\overline{\mathbf{F}}_p} (\sigma)$.

 Let $\text{ind}_K ^G \sigma$ be the smooth representation of $G$ compactly induced from $\sigma$, i.e., the representation of $G$ with underlying space $S(G, \sigma)$
\begin{center}
$S(G, \sigma)=\{f: G\rightarrow W\mid  f(kg)=\sigma (k)\cdot f(g), \forall~k\in K, g\in G,~ \text{smooth~with~compact~support}\}$
\end{center}
and $G$ acting by right translation. In this paper, we will sometimes call $\text{ind}_K ^{G}\sigma$ a maximal compact induction. As usual, denote by $[g, v]$ the function in $S(G, \sigma)$, supported on $Kg^{-1}$ and having value $v\in W$ at $g^{-1}$.  An element $g'\in G$ acts on the function $[g, v]$ by $g'\cdot[g, v]= [g'g, v]$, and we have $[gk, v]= [g, \sigma(k)v]$ for $k\in K$.

The spherical Hecke algebra $\mathcal{H}(K, \sigma)$ is defined as $\text{End}_G (\text{ind}^G _K \sigma)$, and it is isomorphic to $\overline{\mathbf{F}}_p [T]$(\cite[Corollary 1.3]{Her2011a}), for certain $T \in \mathcal{H}(K, \sigma)$. We explain below the Hecke operator $T$ in detail. By \cite[Proposition 5]{B-L94}, the algebra $\mathcal{H}(K, \sigma)$ is isomorphic to the convolution algebra $\mathcal{H}_K (\sigma)$:
\begin{center}
$\mathcal{H}_K (\sigma)= \{\varphi: G\rightarrow \text{End}_{\overline{\mathbf{F}}_p}(\sigma) \mid \varphi(kgk')=\sigma(k)\varphi(g)\sigma(k'), \forall~k, k'\in K, g\in G, ~\text{smooth~with~compact~support}\}$
\end{center}

Let $\varphi$ be the function in $\mathcal{H}_K (\sigma)$, supported on $K\alpha K$ and satisfying $\varphi (\alpha)= j_\sigma$. Let $T$ be the unique element in $\mathcal{H}(K, \sigma)$ which corresponds to the function $\varphi$, via the isomorphism aforementioned between $\mathcal{H}_K (\sigma)$ and $\mathcal{H}(K, \sigma)$. We refer the reader to \cite[(4)]{X2019} for the following formula of $T$: for a $v \in \sigma$, we have

\begin{equation}\label{explicit form for T}
T [Id, v]=\sum_{u\in N_{n_K}/N_{n_K+ 2}} [u \alpha^{-1}, j_{\sigma}\sigma (u^{-1})v]+ \sum_{u\in N_{n_K +1}/N_{n_K+ 2}}~[\beta_K u\alpha^{-1}, j_{\sigma}\sigma(\beta_K) v]
\end{equation}

\subsection{The image of $(\textnormal{ind}_K ^G \sigma)^{I_{1, K}}$  under the Hecke operator $T$}\label{subsection: image of I_1 in the quotient}

We fix a non-zero vector $v_0\in \sigma^{I_{1, K}}$. Let $f_n$ be the function in $(\textnormal{ind}_K ^G \sigma)^{I_{1, K}}$, supported on $K \alpha^{-n} I_{1, K}$, such that
\begin{center}
$f_n (\alpha^{-n})=  \begin{cases}
\beta_K\cdot v_0, ~~~~~~~n>0,\\
v_0 ~~~~~n\leq 0.
\end{cases}$
\end{center}
Then, we have (\cite[Lemma 3.5]{X2019})

\begin{lemma}
The set of functions $\{f_n \mid n\in \mathbb{Z}\}$ consists of a basis of the $I_{1, K}$-invariants of the maximal compact induction $\textnormal{ind}^{G} _K\sigma$.
\end{lemma}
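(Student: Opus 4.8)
The plan is to show that the functions $\{f_n \mid n \in \mathbb{Z}\}$ both span and are linearly independent in $(\textnormal{ind}^G_K \sigma)^{I_{1,K}}$. I would begin with the structural observation underlying everything: since $\sigma$ is a weight, the space $\sigma^{I_{1,K}}$ is one-dimensional, so any $I_{1,K}$-invariant function $f$ in the compact induction is determined on each $K$-coset by a single scalar multiple of a fixed invariant vector. Thus the first step is to translate $I_{1,K}$-invariance of a function $f \in \textnormal{ind}^G_K \sigma$ into a condition on its support: if $f$ is supported on a double coset $K g I_{1,K}$ and takes value $w$ at $g^{-1}$, then right-invariance under $I_{1,K}$ forces $\sigma$-compatibility conditions that pin down $w$ up to a scalar, and these are non-trivially satisfiable precisely when $g$ lies in a distinguished set of representatives for $K \backslash G / I_{1,K}$.

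The key input is therefore a description of the double cosets $K \backslash G / I_{1,K}$. I would invoke the Cartan/Iwasawa decomposition for $G$ relative to the maximal compact $K$: every element of $G$ lies in some $K \alpha^n K$, and refining by the Iwahori and pro-$p$-Iwahori structure (using $K = \coprod I_K w I_K$ over the relevant Weyl group and the relation $\beta_K \in K$), one finds that the double cosets $K \backslash G / I_{1,K}$ are indexed exactly by $\mathbb{Z}$, with representatives $\alpha^{-n}$. The case distinction in the definition of $f_n$ — the factor $\beta_K$ appearing for $n > 0$ but not for $n \leq 0$ — is precisely what is needed to make the invariance condition consistent across the two ``chambers'' separated by the special vertex, and I expect this is where the geometry of the building (the fact that $K$ is special) enters. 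Establishing that these double coset representatives are exhausted by the $\alpha^{-n}$, and that each admits a one-dimensional space of compatible invariant values, is the heart of the spanning argument.

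For linear independence, the functions $f_n$ have pairwise disjoint supports $K \alpha^{-n} I_{1,K}$ (distinct double cosets are disjoint), so no non-trivial linear combination can vanish; this step is immediate once the double coset description is in hand. The spanning is the substantive direction: given an arbitrary $f \in (\textnormal{ind}^G_K \sigma)^{I_{1,K}}$, I would decompose its support into the double cosets $K\alpha^{-n} I_{1,K}$, and on each piece show that $f$ restricted there is a scalar multiple of $f_n$ — this uses that $\sigma^{I_{1,K}}$ is one-dimensional together with the precise normalization $f_n(\alpha^{-n}) = \beta_K v_0$ or $v_0$, and the fact that the value of an invariant function at $\alpha^{-n}$ must be an $(\alpha^n I_{1,K} \alpha^{-n} \cap K)$-invariant vector in $\sigma$.

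The main obstacle I anticipate is the bookkeeping around the two $K$-conjugacy classes $\{\beta, \beta'\}$ and the resulting $\beta_K$-twist: verifying that the invariance condition at $\alpha^{-n}$ genuinely requires the value $\beta_K \cdot v_0$ for positive $n$ (and not for non-positive $n$) demands a careful computation of how $\alpha^n I_{1,K} \alpha^{-n} \cap K$ sits inside $K$ and which vectors of $\sigma$ it fixes. This hinges on the pro-$p$-Iwahori factorization $I_{1,K} = N'_{m_K} \cdot H_1 \cdot N_{n_K}$ and tracking the $\alpha$-conjugation on the unipotent parts $N_{n_K}$ and $N'_{m_K}$, using the integers $n_K, m_K$ recorded earlier. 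Since this is cited as \cite[Lemma 3.5]{X2019}, I would expect the full argument to reduce to an explicit but routine verification of these conjugation relations once the double coset indexing is correctly set up.
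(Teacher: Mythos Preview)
The paper does not actually prove this lemma: it is stated with a bare citation to \cite[Lemma 3.5]{X2019}, so there is no in-paper argument against which to compare your proposal in detail. That said, your sketch is the standard and correct strategy for results of this type, and it is almost certainly what the cited reference does: Mackey/Frobenius reasoning identifies $(\textnormal{ind}^G_K \sigma)^{I_{1,K}}$ with a direct sum over $K\backslash G/I_{1,K}$ of spaces $\sigma^{\,gI_{1,K}g^{-1}\cap K}$, one uses the Cartan decomposition $G=\coprod_{n\ge 0} K\alpha^n K$ and then refines each $K\alpha^n K$ into its $(K,I_{1,K})$-double cosets (two for $n\ge 1$, one for $n=0$) to get the indexing by $\mathbb{Z}$, and finally one checks via the Iwahori factorization that each invariant space is one-dimensional, spanned by $v_0$ or $\beta_K v_0$ according to the sign of $n$. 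Your identification of the $\beta_K$-twist as coming from the shape of $\alpha^{-n}I_{1,K}\alpha^{n}\cap K$ is exactly right, and linear independence via disjoint supports is immediate.

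One small sharpening: rather than phrasing the spanning as ``decompose the support of an arbitrary $f$,'' it is cleaner to invoke directly that $(\textnormal{ind}^G_K \sigma)^{I_{1,K}} \cong \bigoplus_{g\in K\backslash G/I_{1,K}} \sigma^{\,gI_{1,K}g^{-1}\cap K}$ as vector spaces; this makes both spanning and the dimension count automatic once you know the double coset set and the relevant fixed spaces. The only genuine work, as you say, is the explicit description of $K\backslash G/I_{1,K}$ and of the groups $\alpha^{-n}I_{1,K}\alpha^{n}\cap K$, which is a routine building/Iwahori computation.
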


\begin{proposition}\label{hecke operator formula}

We have:

$(1)$~~~$T\cdot f_0= f_{-1} + \lambda_{\beta_K, \sigma}\cdot f_1$.

$(2)$~~~For $n\neq 0$, $T \cdot f_n= cf_n +f_{n+\delta(n)}$, where $c$ is a constant (depending on $\sigma$) and $\delta(n)$ is either $1$ or $-1$, depending on $n> 0$ or $< 0$.
\end{proposition}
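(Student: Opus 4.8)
The plan is to apply the explicit formula \eqref{explicit form for T} directly and to exploit that $T$ is $G$-equivariant, so that $T\cdot f_n$ is again $I_{1,K}$-invariant and hence, by the preceding Lemma, a finite $\overline{\mathbf{F}}_p$-linear combination of the $f_m$. The coefficients are then determined by comparing supports and by evaluating at the points $\alpha^{-m}$.

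For part $(1)$, I first observe that $f_0=[Id,v_0]$, because $I_{1,K}\subset K$ and $v_0\in\sigma^{I_{1,K}}$. Substituting $v=v_0$ into \eqref{explicit form for T}, the first sum simplifies: for $u\in N_{n_K}=N\cap I_{1,K}$ one has $\sigma(u^{-1})v_0=v_0$, so $j_\sigma\sigma(u^{-1})v_0=j_\sigma v_0=v_0$, and the first sum reads $\sum_{u\in N_{n_K}/N_{n_K+2}}[u\alpha^{-1},v_0]$. Using $\alpha N_k\alpha^{-1}=N_{k-2}$, I would check that these terms run over exactly the distinct left $K$-cosets constituting the double coset $K\alpha I_{1,K}$ (the $H$- and $N'$-parts of $I_{1,K}$ being absorbed), each carrying the $I_{1,K}$-fixed value $v_0$. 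Since $f_{-1}$ is the unique $I_{1,K}$-invariant function on $K\alpha I_{1,K}$ with value $v_0$ at $\alpha$, the first sum equals $f_{-1}$.

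The second sum has constant value $w:=j_\sigma\sigma(\beta_K)v_0$, and since $\sigma^{I_{1,K}}$ is one-dimensional we may write $w=\lambda_{\beta_K,\sigma}\,v_0$, which defines the scalar. Here I would use the Bruhat-type identity \eqref{useful identity}, together with the relation $\alpha\beta=\beta\alpha^{-1}$, to rewrite each element $\beta_K u\alpha^{-1}$ and, after absorbing $\beta_K\in K$ on the left, to place every summand inside the double coset $K\alpha^{-1}I_{1,K}$ on which $f_1$ lives. The crucial point is that the summands must be reassembled using the left $K$-equivariance of the values $f(kg)=\sigma(k)f(g)$, rather than by a naive count of coinciding supports: combining $w=\lambda_{\beta_K,\sigma}v_0$ with $\sigma(\beta_K)^2=Id$ shows that the contributions organise into $\lambda_{\beta_K,\sigma}$ times the $I_{1,K}$-invariant seed of $f_1$, so the second sum is $\lambda_{\beta_K,\sigma}f_1$. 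This proves $(1)$.

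For part $(2)$ I would feed $f_n$ with $n\neq 0$ into the same machine, writing $f_n=\sum_j g_j\cdot[Id,v_j]$ over its supporting cosets and using $T(g_j\cdot[Id,v_j])=g_j\cdot T[Id,v_j]$. A support estimate confines $T f_n$ to $K\alpha^{-n-1}I_{1,K}\cup K\alpha^{-n}I_{1,K}\cup K\alpha^{-n+1}I_{1,K}$, so $Tf_n\in\overline{\mathbf{F}}_p f_{n-1}\oplus\overline{\mathbf{F}}_p f_n\oplus\overline{\mathbf{F}}_p f_{n+1}$, and it remains to read off the coefficients by evaluating at $\alpha^{-(n-1)},\alpha^{-n},\alpha^{-(n+1)}$. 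The essential, and only genuinely new, difficulty, which I expect to be the main obstacle, is the bookkeeping of the two branches of \eqref{explicit form for T} once the seed is the flipped vector $\beta_K v_0$ (for $n>0$) instead of $v_0$: one branch survives as a shift increasing $|n|$, giving $f_{n+\delta(n)}$ with coefficient $1$ (so $\delta(n)=1$ for $n>0$ and $\delta(n)=-1$ for $n<0$), while the branch that at $n=0$ produced the opposite shift now falls back into the same double coset $K\alpha^{-n}I_{1,K}$ and yields the diagonal term $c\,f_n$, where $c$ is the value of the attendant finite sum over $N_{n_K}/N_{n_K+1}$ (which, unlike the $n=0$ situation, no longer cancels). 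Verifying that the unwanted shift toward $0$ indeed cancels, and pinning down $c$, completes the computation $Tf_n=c f_n+f_{n+\delta(n)}$.
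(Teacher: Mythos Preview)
The paper does not prove this proposition here: it simply cites \cite[Proposition~3.6]{X2019} for~(1) and \cite[Corollary~3.11]{X2019} for~(2), adding only a remark on the value of~$c$. So there is no in-paper argument to compare against; your proposal is an attempt to supply a direct computation that the paper outsources.

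For part~(1) your plan is essentially correct. The identification of the first sum with $f_{-1}$ is right: the cosets $K\alpha u^{-1}$ for $u\in N_{n_K}/N_{n_K+2}$ exhaust $K\alpha I_{1,K}$, each with value $v_0$. For the second sum, your justification is a little tangled (you cannot ``absorb $\beta_K$ on the left'' in $[\beta_K u\alpha^{-1},v_0]$ via the rule $[gk,v]=[g,\sigma(k)v]$, since $\beta_K$ sits on the wrong side), but the conclusion is correct: one checks directly that $f_1=\sum_{u\in N_{n_K+1}/N_{n_K+2}}[\beta_K u\alpha^{-1},v_0]$ (equivalently, $\beta_K f_1=\sum_u[u\alpha^{-1},v_0]$, then use $\beta_K^2=\mathrm{Id}$), and since $j_\sigma$ lands in the line $\sigma^{I_{1,K}}$ the constant $\lambda_{\beta_K,\sigma}$ drops out as claimed.

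For part~(2) the plan is reasonable but there is a genuine gap. Your ``support estimate'' asserts that $Tf_n$ lies in $\langle f_{n-1},f_n,f_{n+1}\rangle$, i.e.\ is supported on three $K\text{--}I_{1,K}$ double cosets. But the obvious containment only gives $\mathrm{supp}(Tf_n)\subset K\alpha K\cdot K\alpha^{-n}I_{1,K}$, and a Cartan-type bound places this inside $K\alpha^{n-1}K\cup K\alpha^{n}K\cup K\alpha^{n+1}K$, each of which (for positive exponent) splits into \emph{two} $K\text{--}I_{1,K}$ cosets. So a priori $Tf_n$ could involve $f_{\pm(n-1)},f_{\pm n},f_{\pm(n+1)}$, and ruling out the ``wrong-sign'' pieces is exactly where the work lies. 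Likewise, the vanishing of the coefficient of $f_{n-\delta(n)}$ (your ``shift toward~$0$'' cancellation) is asserted but not shown; in the cited source this comes from an explicit Iwahori--Bruhat bookkeeping (cf.\ the computations in the Appendix here for $S_K,S_-$, where one repeatedly uses identities of the type \eqref{useful identity} and the coset calculus of \cite[Proposition~6.1]{X2018}). Until you carry out those two computations --- the refined support estimate and the evaluation at $\alpha^{\mp n}$ showing the unwanted coefficients vanish --- part~(2) remains a sketch rather than a proof.
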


\begin{proof}
$(1)$ is \cite[Proposition 3.6]{X2019}), and $(2)$ is \cite[Corollary 3.11]{X2019}. The value of $c$ is not recorded explicitly in \emph{loc.cit}, but by the same argument we can check that it is zero if $\text{dim}_{\overline{\mathbf{F}}_p} \sigma > 1$; when $\sigma$ is a character, it is equal to $\sum_{(x, t)\in L^\times _{q^{4-t_K}}}\chi_\sigma (h(t))$.
\end{proof}

\smallskip
\emph{We will occasionally write $f_{n,\sigma}$ for $f_n$ to indicate that the function is defined with respect to a specific weight $\sigma$.}

\subsection{The supersingular universal quotient of $G$}

Let $\pi$ be an irreducible smooth representation of $G$, and let $\sigma$ be a weight of $K$ contained in $\pi$. By composition, the space
\begin{center}
$\text{Hom}_G (\text{ind}^G _K \sigma, \pi)$
\end{center}
is a right module over the spherical Hecke algebra $\mathcal{H}(K, \sigma): =\text{End}_G (\text{ind}^G _K \sigma)$. The main result of \cite{X2018} is:
\begin{theorem}
The space $\text{Hom}_G (\textnormal{ind}^G _K \sigma, \pi)$ admits eigenvectors for the spherical Hecke algebra $\mathcal{H}(K, \sigma)$.
\end{theorem}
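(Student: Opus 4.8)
The plan is to reduce the statement to a question about a cyclic module over $\overline{\mathbf{F}}_p[T]$ and then to locate a torsion element in it. First I would apply Frobenius reciprocity to identify
\[
M:=\textnormal{Hom}_G(\textnormal{ind}^G_K\sigma,\pi)\;\cong\;\textnormal{Hom}_K(\sigma,\pi|_K)\;=\;\textnormal{Hom}_{\Gamma_K}(\sigma,\pi^{K^1}),
\]
which is nonzero precisely because $\sigma$ is contained in $\pi$. Evaluating a homomorphism $\Phi$ at the function $f_0=[Id,v_0]$ gives a map $\textnormal{ev}\colon M\to\pi^{I_{1,K}}$, $\Phi\mapsto\Phi(f_0)$. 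Since $\sigma^{I_{1,K}}=\overline{\mathbf{F}}_p v_0$ is one-dimensional and $v_0$ generates $\sigma$ as a $K$-representation, $\textnormal{ev}$ is injective, and its image lies in the $\chi_\sigma$-eigenspace of $\pi^{I_{1,K}}$ for $H_0/H_1$. This realizes $M$ concretely inside the invariants of $\pi$.

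Next I would exploit the $\overline{\mathbf{F}}_p[T]$-module structure on $M$ given by the right action $\Phi\cdot T=\Phi\circ T$. Over the algebraically closed field $\overline{\mathbf{F}}_p$ the existence of an eigenvector for $T$ is equivalent to $M$ possessing a nonzero torsion element, i.e.\ to the existence of $0\neq\Phi$ and $\lambda\in\overline{\mathbf{F}}_p$ with $\Phi\circ(T-\lambda)=0$; equivalently, $\pi$ should be a quotient of $\textnormal{ind}^G_K\sigma/(T-\lambda)$ for some $\lambda$. Writing $w_n:=\Phi(f_n)$ and feeding the formulas of Proposition \ref{hecke operator formula} through $\textnormal{ev}$, the eigenvector equation $\Phi\cdot T=\mu\Phi$ becomes the scalar system $w_{n+1}=(\mu-c)w_n$ for $n>0$, $w_{n-1}=(\mu-c)w_n$ for $n<0$, together with the balancing relation $w_{-1}+\lambda_{\beta_K,\sigma}\,w_1=\mu w_0$ at $n=0$. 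Thus producing an eigenvector amounts to finding a scalar $\mu$ for which the vectors $\Phi(f_n)\in\pi^{I_{1,K}}$ satisfy these relations. If $\pi$ were admissible this would be immediate: $\pi^{I_{1,K}}$, hence $M$, would be finite-dimensional, and any endomorphism of a nonzero finite-dimensional space over $\overline{\mathbf{F}}_p$ has an eigenvalue.

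The main obstacle is precisely that admissibility is not available for a general irreducible smooth $\pi$, so $M$ and the eigenspace could a priori be infinite-dimensional, and the family $\{w_n\}_{n\in\mathbb{Z}}$ need not become linearly dependent for formal reasons. Here the usual remedy, namely a Schur-type argument over an uncountable coefficient field forcing a torsion element, is unavailable, since $\overline{\mathbf{F}}_p$ is countable. I therefore expect the decisive input to be the explicit geometry behind formula \eqref{explicit form for T}: the functions $f_n$ are supported on the disjoint double cosets $K\alpha^{-n}I_{1,K}$, so the $w_n$ are built from the $\pi(\alpha^{-n})$-translates of a fixed invariant vector (with the $\beta_K$-twist distinguishing $n>0$ from $n\le 0$). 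The heart of the argument should be to show that, inside the irreducible and hence tightly generated quotient $\pi$, these translates cannot remain linearly independent for all $n$; extracting a single linear relation and combining it with the recursions above then yields a nonzero polynomial $p$ with $p(T)\Phi=0$, and hence the desired eigenvector. Making this failure of independence unconditional, without admissibility, is the crux, and the step I expect to require the specific structure of $G$ and of $\textnormal{ind}^G_K\sigma$ as a module over $\overline{\mathbf{F}}_p[T]$.
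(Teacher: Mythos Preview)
Two points. First, the paper you are comparing against does \emph{not} contain a proof of this theorem: it is quoted as ``the main result of \cite{X2018}'' and simply cited. So there is no in-paper argument to compare with; the relevant proof lives in that reference.

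Second, your proposal is not a proof but a plan, and you correctly isolate the gap yourself. Everything up to and including the translation of the eigenvector problem into the recursion for the $w_n=\Phi(f_n)$ is fine and standard. The substantive claim---that for an irreducible smooth $\pi$ the family $\{w_n\}$ cannot stay linearly independent, hence some nonzero polynomial in $T$ kills $\Phi$---is exactly what has to be proved, and you only \emph{expect} it. Over $\overline{\mathbf{F}}_p$ there is no abstract mechanism (Schur, countability, etc.) forcing torsion in a cyclic $\overline{\mathbf{F}}_p[T]$-module sitting inside $\pi^{I_{1,K}}$; one genuinely needs the specific combinatorics of $U(2,1)$ (the interaction of $T$ with the operators $S_K$, $S_-$ and the resulting control on how the $f_n$ propagate inside any irreducible quotient). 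Without supplying that argument, the proposal stops precisely at the hard step, so as written it is incomplete rather than an alternative proof.
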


We modify the Hecke operator $T$ slightly as follows:

\begin{center}
$T_\sigma =\begin{cases}
T,~~~~~~~~~~~~~~\textnormal{dim}\sigma > 1\\
T, ~~~~~~~~~~~~~~\textnormal{dim}\sigma = 1, \chi_\sigma= \chi^s _\sigma~\chi_\sigma \neq \chi\circ\text{det};\\
T+1, ~~~~~~~\textnormal{dim}\sigma = 1, \chi_\sigma= \chi\circ\text{det}.
\end{cases}$
\end{center}

As the algebra $\mathcal{H}(K, \sigma)$ is isomorphic to $\overline{\mathbf{F}}_p [T_\sigma]$, the representation $\pi$ is isomorphic to a quotient of $\text{ind}^G _K \sigma /(T_\sigma -\lambda)$, for some $\lambda \in \overline{\mathbf{F}}_p$.  In the case $\lambda= 0$, we encounter the so-called universal supersingular quotient of $G$, as the main theme we will deal with in the present paper.

\section{The space $\text{Hom}_G (\text{ind}^G _K \sigma, \text{ind}^G _K \sigma')$}

Let $\sigma$ and $\sigma'$ be two weights of $K$, and denote by $L(\sigma, \sigma')$ the space
\begin{center}
$\text{Hom}_G (\text{ind}^G _K \sigma, \text{ind}^G _K \sigma')$.
\end{center}

\emph{The following two lemmas should be known in quite generality, even our arguments here seems different.}
\begin{lemma}
If $\chi_{\sigma'} \notin \{\chi_\sigma, \chi^s _\sigma \}$, then the space $L(\sigma, \sigma')$ is zero.
\end{lemma}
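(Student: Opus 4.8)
The plan is to reduce the whole statement to the action of the diagonal torus $H_0$ on the $I_{1,K}$-invariants of the two induced representations. First I would record the (standard) observation that $\text{ind}^G_K \sigma$ is generated as a $G$-representation by the single vector $f_{0,\sigma}=[Id,v_0]$: since $\sigma$ is irreducible and $v_0\neq 0$ we have $\sigma=\overline{\mathbf{F}}_p[K]v_0$, and the relations $[gk,v]=[g,\sigma(k)v]$ together with $g\cdot[Id,v]=[g,v]$ then show that every $[g,v]$ lies in the $G$-submodule generated by $[Id,v_0]$. Consequently any $\Phi\in L(\sigma,\sigma')$ is completely determined by the single element $\Phi(f_{0,\sigma})\in\text{ind}^G_K\sigma'$, and $\Phi$ vanishes as soon as $\Phi(f_{0,\sigma})=0$.

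Next I would extract the constraints on $\Phi(f_{0,\sigma})$ coming from $I_{1,K}$ and $H_0$. Because $v_0\in\sigma^{I_{1,K}}$, the vector $f_{0,\sigma}=[Id,v_0]$ is fixed by $I_{1,K}$ and satisfies $h\cdot f_{0,\sigma}=\chi_\sigma(h)f_{0,\sigma}$ for $h\in H_0$; by $G$-equivariance, $\Phi(f_{0,\sigma})$ is therefore an $I_{1,K}$-invariant vector in $\text{ind}^G_K\sigma'$ on which $H_0$ acts through $\chi_\sigma$. So I would pin down exactly which $H_0$-eigencharacters occur in $(\text{ind}^G_K\sigma')^{I_{1,K}}$. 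Working in the basis $\{f_{n,\sigma'}\}$, the key computation is that each $f_{n,\sigma'}$ is itself an $H_0$-eigenvector: since $H_0\subseteq I_K$ normalizes $I_{1,K}$ and commutes with the diagonal $\alpha$, right translation by $h\in H_0$ preserves the support $K\alpha^{-n}I_{1,K}$ of $f_{n,\sigma'}$, and evaluating at $\alpha^{-n}$ gives $h\cdot f_{n,\sigma'}=\chi_{\sigma'}(h)f_{n,\sigma'}$ for $n\leq 0$ and $h\cdot f_{n,\sigma'}=\chi^s_{\sigma'}(h)f_{n,\sigma'}$ for $n>0$. The twist by $\chi^s_{\sigma'}$ in the latter case comes from the value $\beta_K v_0'$ of $f_{n,\sigma'}$ at $\alpha^{-n}$, where $v_0'$ is the fixed vector of $\sigma'$, together with the identity $\beta_K^2=Id$ (valid for both $\beta$ and $\beta'$), which identifies $h\mapsto\chi_{\sigma'}(\beta_K h\beta_K^{-1})$ with the definition of $\chi^s_{\sigma'}$.

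It then follows that the only $H_0$-eigencharacters appearing in $(\text{ind}^G_K\sigma')^{I_{1,K}}$ are $\chi_{\sigma'}$ and $\chi^s_{\sigma'}$. Writing $\Phi(f_{0,\sigma})=\sum_n c_n f_{n,\sigma'}$ and comparing coefficients under the $H_0$-action (the $f_{n,\sigma'}$ being linearly independent), any nonzero $c_n$ forces $\chi_\sigma$ to equal $\chi_{\sigma'}$ or $\chi^s_{\sigma'}$. Hence if $\chi_\sigma\notin\{\chi_{\sigma'},\chi^s_{\sigma'}\}$, then $\Phi(f_{0,\sigma})=0$ and therefore $\Phi=0$. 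Finally I would translate the hypothesis: applying the involution $s$ shows that $\chi_\sigma\in\{\chi_{\sigma'},\chi^s_{\sigma'}\}$ is equivalent to $\chi_{\sigma'}\in\{\chi_\sigma,\chi^s_\sigma\}$, so the assumption $\chi_{\sigma'}\notin\{\chi_\sigma,\chi^s_\sigma\}$ is exactly what forces $L(\sigma,\sigma')=0$.

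The main obstacle I expect is the eigencharacter computation in the second paragraph, namely verifying that right translation by $H_0$ fixes each double coset $K\alpha^{-n}I_{1,K}$ and tracking the $\beta_K$-twist correctly for $n>0$; once the generation statement and this torus action are in hand, the rest is purely formal.
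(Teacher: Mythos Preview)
Your proof is correct and follows essentially the same route as the paper's: the paper's argument simply says that a nonzero $G$-map sends $f_{0,\sigma}$ to some $\sum_k c_k f_{k,\sigma'}$ and that ``considering the action of $I_K$ on both sides'' yields a contradiction, which is precisely your $H_0$-eigencharacter comparison on the basis $\{f_{n,\sigma'}\}$ of the $I_{1,K}$-invariants. You have just made explicit what the paper leaves to the reader (the generation of $\text{ind}^G_K\sigma$ by $f_{0,\sigma}$, the eigencharacter $\chi^s_{\sigma'}$ on $f_{n,\sigma'}$ for $n>0$ via $\beta_K^2=Id$, and the symmetry $\chi_\sigma\in\{\chi_{\sigma'},\chi^s_{\sigma'}\}\Leftrightarrow\chi_{\sigma'}\in\{\chi_\sigma,\chi^s_\sigma\}$).
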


\begin{proof}
If the space $L(\sigma, \sigma')$ is not zero, any non-zero map in the space sends $f_{0, \sigma}$ to a function of the form $\sum_k c_k f_{k, \sigma'}$. By considering the action of $I_K$ on both sides, we get a contradiction under the assumption of the Lemma.
\end{proof}

\begin{lemma}
Let $\sigma$ be a weight such that $\chi_\sigma \neq \chi^s _\sigma$. We have
\begin{center}
$L(\sigma, \sigma^s)=0$
\end{center}
\end{lemma}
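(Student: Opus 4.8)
The plan is to compute $L(\sigma,\sigma^s)$ by Frobenius reciprocity together with the Mackey--Cartan decomposition, locating the obstruction to a nonzero map in a mismatch of torus characters. By Frobenius reciprocity $L(\sigma,\sigma^s)\cong \text{Hom}_K(\sigma,(\text{ind}^G_K\sigma^s)|_K)$, and the Cartan decomposition $G=\bigsqcup_{k\ge 0}K\alpha^k K$ yields a $K$-equivariant decomposition $(\text{ind}^G_K\sigma^s)|_K\cong\bigoplus_{k\ge 0}\text{ind}^K_{K_k}(\sigma^s)^{\alpha^k}$, where $K_k=K\cap \alpha^k K\alpha^{-k}$ and $(\sigma^s)^{\alpha^k}$ is the $\alpha^k$-conjugate $(\sigma^s)^{\alpha^k}(y)=\sigma^s(\alpha^{-k}y\alpha^k)$ of $\sigma^s$ on $K_k$. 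Thus $L(\sigma,\sigma^s)\cong\bigoplus_{k\ge 0}\text{Hom}_{K_k}(\sigma,(\sigma^s)^{\alpha^k})$, and it suffices to kill each summand. This is the same information as in the style of Section \ref{subsection: image of I_1 in the quotient}: a map $\Phi$ is determined by $\Phi(f_{0,\sigma})=[\text{Id},v_0]$, which is $I_{1,K}$-invariant of $H_0$-character $\chi_\sigma$; since $H_0$ acts on the basis $\{f_{k,\sigma^s}\}$ by $\chi_{\sigma^s}=\chi^s_\sigma$ for $k\le 0$ and by $\chi_\sigma$ for $k>0$, the image lies in $\bigoplus_{k>0}\overline{\mathbf F}_p f_{k,\sigma^s}$, i.e.\ in the part supported on $\bigsqcup_{k\ge 1}K\alpha^k K$, which is exactly the $k\ge 1$ portion of the decomposition above.

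For $k=0$ the summand is $\text{Hom}_K(\sigma,\sigma^s)$, which vanishes because $\sigma$ and $\sigma^s$ are non-isomorphic irreducible weights: their $I_{1,K}$-invariant characters satisfy $\chi_{\sigma^s}=\chi^s_\sigma\neq\chi_\sigma$. For $k\ge 1$ I would write $K_k$ explicitly from the matrix descriptions of $K$ and $\alpha$: it is the subgroup of $K$ whose strictly lower entries lie in increasing powers of $\mathfrak{p}_E$, so that its image in $\Gamma_K$ is the upper Borel $\mathbb{B}$, while the conjugation $\mathrm{Ad}(\alpha^{-k})$ carries $K_k$ into the opposite parahoric, so that the composite $K_k\xrightarrow{\mathrm{Ad}(\alpha^{-k})}K\twoheadrightarrow\Gamma_K$ has image the opposite Borel $\mathbb{B}'$. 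Consequently $\sigma|_{K_k}$ factors through $\mathbb{B}$, whereas $(\sigma^s)^{\alpha^k}|_{K_k}$ factors through $\mathbb{B}'$.

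The heart of the matter is then a character comparison for a putative $K_k$-homomorphism $\phi\colon\sigma\to(\sigma^s)^{\alpha^k}$. Along $K_k\to\mathbb{B}$ the upper unipotent part of $K_k$ surjects onto $\mathbb{U}$, whereas along $K_k\to\mathbb{B}'$ it maps into $K^1$ and hence acts trivially on $(\sigma^s)^{\alpha^k}$; therefore $\phi$ factors through the $\mathbb{U}$-coinvariants $\sigma_{\mathbb{U}}$, a one-dimensional $H_0$-module of character $\chi^s_\sigma$. Symmetrically, the lower unipotent part of $K_k$ acts trivially on $\sigma$ but surjects onto $\mathbb{U}'$ on the target, so $\mathrm{Im}(\phi)$ lands in the $\mathbb{U}'$-invariants $\bigl((\sigma^s)^{\alpha^k}\bigr)^{\mathbb{U}'}$, a one-dimensional $H_0$-module of character $\chi_\sigma$. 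Since $\phi$ is $H_0$-equivariant, a nonzero $\phi$ would force $\chi^s_\sigma=\chi_\sigma$, contradicting the hypothesis; hence $\phi=0$. This kills every summand and gives $L(\sigma,\sigma^s)=0$.

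I expect the main obstacle to be the bookkeeping in step two and three, rather than the final comparison: identifying $K_k$ precisely and tracking which unipotent subgroups act through $\mathbb{U}$, through $\mathbb{U}'$, or trivially under the untwisted and the $\alpha^k$-twisted actions, and in particular verifying the one-dimensionality and the $H_0$-characters of $\sigma_{\mathbb{U}}$ and of $\bigl((\sigma^s)^{\alpha^k}\bigr)^{\mathbb{U}'}$ (the opposite-Borel analogues of the facts recalled in Section \ref{subsec: spherical hecke algebra} via \cite[Theorem 6.12]{C-E2004}), together with confirming the Cartan decomposition with representatives $\{\alpha^k\}_{k\ge 0}$ for both $K_0$ and $K_1$. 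Once these are in place, the proof reduces to the single observation that $\sigma$ is seen through $\mathbb{B}$ on the source and $\sigma^s$ through $\mathbb{B}'$ on the target, which pins $\phi$ between the $\chi^s_\sigma$-line and the $\chi_\sigma$-line.
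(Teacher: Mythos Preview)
Your argument is correct and genuinely different from the paper's. The paper works entirely inside the explicit basis $\{f_{n,\sigma'}\}$ of $I_{1,K}$-invariants: after observing (as you also do) that a nonzero $G$-map must send $f_{0,\sigma}$ into $\bigoplus_{k\ge 1}\overline{\mathbf F}_p\, f_{k,\sigma^s}$, it applies the averaging operator $S_K$ of the Appendix and reaches a contradiction from $S_K f_{0,\sigma}=d_0 f_{0,\sigma}=0$ (here $\chi_\sigma\neq\chi_\sigma^s$ forces $\sigma$ not to be a twist of Steinberg, so $d_0=0$) versus $S_K\!\left(\sum_{k\ge 1}c_k f_{k,\sigma^s}\right)=\sum_{k\ge 1}c_k f_{-k,\sigma^s}\neq 0$. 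Your route via Frobenius--Mackey and the factorisation of any $K_k$-map through $\sigma_{\mathbb U}\to(\sigma^s)^{\mathbb U'}$ is the standard ``change-of-weight'' obstruction argument; it is more portable (it does not rely on the operators $S_K$, $S_-$ or on Proposition~\ref{the image of I_1 under S_K and S_-}) and makes transparent that the vanishing is a pure $H_0$-character mismatch $\chi_\sigma^s\neq\chi_\sigma$ on each Cartan cell. The paper's proof, by contrast, is shorter once the Appendix is in hand and dovetails with the rest of the article, where $S_K$ and $S_-$ are the workhorses. The bookkeeping you flag (the shape of $K_k$, that the upper unipotent of $K_k$ surjects onto $\mathbb U$ while conjugation by $\alpha^{-k}$ sends it into $K^1$, and the one-dimensionality with characters $\chi_\sigma^s$ and $\chi_\sigma$ of $\sigma_{\mathbb U}$ and $(\sigma^s)^{\mathbb U'}$) all checks out for both $K_0$ and $K_1$; the last point follows from the opposite-Borel analogue of \cite[Theorem~6.12]{C-E2004} together with the $\beta_K$-conjugation that swaps $\mathbb U$ and $\mathbb U'$.
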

\begin{proof}
As above, any non-zero $G$-map sends $f_{0, \sigma}$ to a function of the form $\sum_k c_k f_{k, \sigma^s }$. By considering the action of $I_K$ on both sides, we see the sum will only have terms in the positive part, that is of the form $\sum_{k\geq1} c_k f_{k, \sigma^s }$.

Now we apply the operator $S_K$ to $f_{0, \sigma}$. By our assumption on $\sigma$ and $(2)$ of Proposition \ref{the image of I_1 under S_K and S_-}, we get $S_K f_{0,\sigma} =0$. However, by $(1)$ of the same Proposition, we have
\begin{center}
$S_K (\sum_{k\geq1} c_k f_{k, \sigma^s })= \sum_{k\geq1} c_k f_{-k, \sigma^s }\neq 0$
\end{center}
As any $G$-map respects the action of $S_K$, that is a contradiction.
\end{proof}

\begin{remark}
One may rephrase the Lemma as follows. By Frobenius reciprocity, for a weight $\sigma$ satisfying $\chi_\sigma \neq \chi^s _\sigma$  the maximal compact induction $\textnormal{ind}^G _K \sigma$ does not contain the weight $\sigma^s$.
\end{remark}

The Lemmas above show that the remaining interesting case is $\{\sigma, \sigma'\}= \{1, st\}$, which we deal with in the following part.

\begin{proposition}\label{change of weight}
We have

$(1)$.~ For a non-zero function $f \in \langle f_{-n} \mid n\geq 1\rangle_{\overline{\mathbf{F}}_p}$, there exists a non-zero $G$-map $S_f$ from $\textnormal{ind}^G _K st$ to $\textnormal{ind}^G _K 1$, characterized by $S_f (f_0) =f$.

$(2)$.~The map $f \mapsto S_f$ from $(1)$ gives an isomorphism of vector spaces:
\begin{center}
$\langle f_{-n} \mid n\geq 1\rangle_{\overline{\mathbf{F}}_p} \cong \textnormal{Hom}_G (\textnormal{ind}^G _K st, \textnormal{ind}^G _K 1)$.
\end{center}
\end{proposition}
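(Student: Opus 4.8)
The plan is to turn the computation of $L(st,1)=\textnormal{Hom}_G(\textnormal{ind}^G_K st,\textnormal{ind}^G_K 1)$ into a problem about highest-weight vectors, and then to pin those vectors down as the $f_{-n,1}$. Since $\textnormal{ind}^G_K st$ is generated as a $G$-representation by $f_{0,st}$, any $G$-map $S$ is determined by the single vector $w=S(f_{0,st})$, and by Frobenius reciprocity $\textnormal{Hom}_G(\textnormal{ind}^G_K st,\textnormal{ind}^G_K 1)\cong\textnormal{Hom}_K(st,\textnormal{ind}^G_K 1)$; concretely, the assignment $f_{0,st}\mapsto w$ extends to a (necessarily unique) $G$-map exactly when the $K$-span of $w$ is a quotient of the weight $st$, i.e.\ (as $st$ is irreducible) when $w$ generates a copy of $st$, or $w=0$. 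As $f_{0,st}$ is $I_{1,K}$-invariant and an $H_0$-eigenvector for $\chi_{st}$, the vector $w$ must share these properties, so $w\in\langle f_{k,1}\mid k\in\mathbb{Z}\rangle$; here I use that $\chi_{st}=\chi_1$ is trivial, so the $I_K$-action imposes no restriction on which indices $k$ occur. The whole proposition thus reduces to deciding, for $w=\sum_k c_k f_{k,1}$, precisely when the $K$-span of $w$ is isomorphic to $st$.

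Next I would pass to the $\Gamma_K$-level picture. Both $1$ and $st$ carry the trivial $H_0$-character on their $I_{1,K}$-invariant line, and the finite principal series $\textnormal{ind}_{\mathbb{B}}^{\Gamma_K}1$ has exactly the two constituents $1$ and $st$ (rank one); since its dimension $[\Gamma_K:\mathbb{B}]=1+q^{t_K}$ is prime to $p$, the trivial constituent splits off and $\textnormal{ind}_{\mathbb{B}}^{\Gamma_K}1\cong 1\oplus st$. Consequently a nonzero $I_{1,K}$-fixed, $H_0$-trivial vector generates a copy of $st$ if and only if it is annihilated by the functional detecting the trivial constituent, i.e.\ by the $\Gamma_K$-invariant summation functional on the ambient $K$-orbit. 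So the task is to evaluate this functional on each $f_{k,1}$.

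For the bookkeeping I would use the Cartan decomposition $G=\bigsqcup_{n\geq 0}K\alpha^n K$. Because $\alpha^{-1}$ is $K$-conjugate to $\alpha$ (via $\beta_K$), the function $f_{k,1}$ is supported in the double coset $K\alpha^{|k|}K$, so as a $K$-representation $\textnormal{ind}^G_K 1$ is the direct sum of Mackey pieces $\textnormal{ind}^K_{K\cap\alpha^{-n}K\alpha^n}1$; the piece $n\geq 1$ carries the two-dimensional space $\langle f_{n,1},f_{-n,1}\rangle$ of $I_{1,K}$-invariants, while the piece $n=0$ carries only $\langle f_{0,1}\rangle$, a trivial $K$-module contributing no $st$. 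Within each piece $n\geq 1$ I would show, using the explicit $K$-action together with the identity \eqref{useful identity}, that the trivial-type functional kills $f_{-n,1}$ but not $f_{n,1}$, so that the unique $st$-highest-weight direction of that piece is exactly $\langle f_{-n,1}\rangle$. Granting this, the $st$-highest-weight vectors of $\textnormal{ind}^G_K 1$ are precisely the nonzero elements of $\langle f_{-n,1}\mid n\geq 1\rangle$: this at once furnishes the map $S_f$ for every such $f$ (part $(1)$) and shows that any $G$-map sends $f_{0,st}$ into this span (surjectivity in part $(2)$), while injectivity of $f\mapsto S_f$ is immediate from $S_f(f_{0,st})=f$.

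I expect the third step to be the main obstacle: evaluating the trivial-type functional on $f_{n,1}$ and $f_{-n,1}$ and verifying the sign dichotomy uniformly for both $K=K_0$ and $K=K_1$. This is exactly the content packaged by the operators $S_K$ and $S_-$ of Proposition \ref{the image of I_1 under S_K and S_-}, whose index-reversing behaviour $f_k\mapsto f_{-k}$ and $G$-equivariance should both supply the computation and give a clean alternative route to surjectivity (any $G$-map commutes with $S_K$). The delicate point is that one must track the $\beta_K$-twist built into the definition of the $f_k$ — the value $\beta_K\cdot v_0$ for $k>0$ versus $v_0$ for $k\leq 0$ — through the orbit sum in each piece, in order to see that it is precisely the negatively-indexed functions whose images become sum-zero and hence Steinberg-type.
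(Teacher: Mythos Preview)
Your approach is essentially the paper's own: reduce via Frobenius reciprocity to finding $I_K$-fixed vectors $w$ in $\textnormal{ind}^G_K 1$ with $\sum_{k \in K/I_K} kw = 0$ (equivalently $w + S_K w = 0$), and then invoke Proposition~\ref{the image of I_1 under S_K and S_-} to see this singles out exactly $\langle f_{-n,1} \mid n \geq 1\rangle$. Your extra framing --- the splitting $\textnormal{Ind}_{\mathbb{B}}^{\Gamma_K}1 \cong 1 \oplus st$, the Mackey bookkeeping, and the $S_K$-equivariance route to surjectivity --- is correct but not needed beyond what the paper already uses.
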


\begin{proof}
The argument here is motivated by that of \cite[Lemma 1.5.5]{Ki09}. Suppose we are given a non-zero $f \in \langle f_{-n}, n\geq 1\rangle_{\overline{\mathbf{F}}_p}$. The Iwahori group $I_K$ acts trivially on $f$ (as the character $\chi_\sigma =1$), so the representation $\textnormal{ind}^G _K 1$ contains the trivial character of $I_K$. By Frobenius reciprocity, we get a $K$-map $\varphi_f$ from the finite principal series $\text{Ind}^K _{I_K} 1$ to $\textnormal{ind}^G _K 1$, sending $1_{I_K}$ to $f$. We claim that $\varphi_f$ kills the constant function in $\text{Ind}^K _{I_K} 1$. In other words, we have:
\begin{center}
$\sum_{k\in K/I_K} k\cdot f =0.$
\end{center}
Note that the following gives a set of representatives for $K/I_K$:
\begin{center}
$\{Id\} \cup \{[u]\beta_K\mid u\in N_{n_K}/N_{n_K +1}\}$.
\end{center}
So the above sum $\sum_{k\in K/I_K} k\cdot f$ reads as
\begin{center}
$f + \sum_{u\in N_{n_K}/N_{n_K +1}} u\beta_K \cdot f$.
\end{center}
In our earlier notation, the second part is simply $S_K \cdot f$. As $f\in \langle f_{-n}, n\geq 1\rangle$,  we see from $(2)$ of Proposition \ref{the image of I_1 under S_K and S_-} that
\begin{center}
$S_K f= -f$,
\end{center}
whence the claim.

In summary, the $K$-map $\varphi_f$ factors through $\text{Ind}^K _{I_K} 1/ 1$, i.e., the Steinberg weight $st$ of $K$. By Frobenius reciprocity again, we get a $G$-map from $\textnormal{ind}^G _K st$ to $\textnormal{ind}^G _K 1$, sending $f_0$ to $f$, that is the map $S_f$ as required.

\smallskip

By revising the argument above and using Proposition \ref{the image of I_1 under S_K and S_-}, we see the map $S_f$ is indeed an isomorphism and we get $(2)$.
\end{proof}

The following is similar but even easier.
\begin{proposition}\label{basis from the trivial weight to steinberg weight}
We have an isomorphism of spaces
\begin{center}
$\textnormal{Hom}_G (\textnormal{ind}^G _K 1, \textnormal{ind}^G _K st)\cong \langle f_n + f_{-n} \mid n\geq 1\rangle$
\end{center}
\end{proposition}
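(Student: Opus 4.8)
The plan is to run the same strategy as in Proposition \ref{change of weight}, but with the roles of $1$ and $st$ interchanged. The starting point is that the trivial weight $1$ embeds into $st$ via Frobenius reciprocity machinery only indirectly, so instead I would begin with a non-zero function $g$ in the target $\textnormal{ind}^G _K st$ that is fixed by $I_K$ (equivalently, on which $\mathbb{U}$ acts trivially and $\mathbb{B}/\mathbb{U}$ acts by the trivial character), and produce from it a $G$-map out of $\textnormal{ind}^G _K 1$ sending $f_{0,1}$ to $g$. Because $1$ is the \emph{trivial} weight, the constraint is cleaner than before: there is no nontrivial finite principal series quotient to factor through. Concretely, $\textnormal{ind}^G _K 1 = \textnormal{ind}^G _K (\textnormal{Ind}^K _{I_K} 1 \twoheadrightarrow 1)$, and a $G$-map out of $\textnormal{ind}^G _K 1$ is given by Frobenius reciprocity by a vector in $(\textnormal{ind}^G _K st)^{K}$, i.e.\ a $K$-fixed vector. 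So the first step is to identify which elements $g = \sum_k c_k f_{k, st}$ are genuinely $K$-fixed (not merely $I_K$-fixed).

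Second, I would impose the $K$-invariance condition on $g$. An element of $(\textnormal{ind}^G _K st)^{I_K}$ lies in the span of the $f_{k, st}$; to be $K$-fixed it must in addition be annihilated by the averaging operator $\sum_{k\in K/I_K} k$, or equivalently be symmetric under the action of the extra coset representatives $\{[u]\beta_K\}$. Using the set of representatives $\{Id\}\cup\{[u]\beta_K \mid u\in N_{n_K}/N_{n_K+1}\}$ for $K/I_K$ exactly as in the previous proof, the symmetry forces a relation between the positive and negative parts of $g$. By $(1)$ and $(2)$ of Proposition \ref{the image of I_1 under S_K and S_-}, the operator $S_K$ sends $f_{n,st}$ to $f_{-n,st}$ (up to the sign/constant dictated by the character $\chi_{st}=1$), and the pairing $f_n + f_{-n}$ is precisely the $S_K$-symmetric combination. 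This is why the answer comes out as $\langle f_n + f_{-n}\mid n\geq 1\rangle$: these are exactly the $K$-fixed vectors, and $f_0$ itself is excluded because applying $S_K$ to $f_{0,st}$ lands in the negative part by $(2)$ of the cited Proposition, producing an obstruction rather than a fixed vector.

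Third, I would verify that the assignment $g\mapsto (\text{the }G\text{-map sending }f_{0,1}\mapsto g)$ is well-defined and injective, and that every $G$-map arises this way. Well-definedness is the claim that a $K$-fixed $g$ really does extend to a $G$-map out of $\textnormal{ind}^G _K 1$: this is immediate from Frobenius reciprocity since $f_{0,1}$ generates $\textnormal{ind}^G _K 1$ as a $G$-representation and its stabiliser-type relations (the Hecke relations of Proposition \ref{hecke operator formula} applied to $\sigma = 1$) are automatically respected once $g$ is $K$-fixed and $I_K$-invariant. Injectivity and surjectivity follow by restricting any $G$-map to $I_{1,K}$-invariants: a $G$-map is determined by the image of $f_{0,1}$, that image must be $I_K$-fixed hence of the form $\sum c_k f_{k,st}$, and $G$-equivariance with respect to $S_K$ forces it into the symmetric span, giving the reverse inclusion.

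The main obstacle I anticipate is the same bookkeeping hurdle as in the proof of Proposition \ref{change of weight}: confirming that the $S_K$-relation does not merely produce a necessary condition but a sufficient one, i.e.\ that every symmetric combination $f_n + f_{-n}$ genuinely lifts to a $G$-map rather than just surviving the finite-level averaging. This requires checking compatibility against the full Hecke action (Proposition \ref{hecke operator formula}) on the whole chain of $f_k$'s, not only at the single coset $K/I_K$. I expect this to be routine given the explicit formula \eqref{explicit form for T} and the fact that $\chi_{1}=\chi^s_{1}$, which is exactly the ``even easier'' simplification the paper alludes to — the trivial character is $S_K$-stable, so no sign defect appears and the symmetric vectors pair up cleanly.
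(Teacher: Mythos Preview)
Your core approach coincides with the paper's: Frobenius reciprocity gives
\[
\textnormal{Hom}_G(\textnormal{ind}^G_K 1,\ \textnormal{ind}^G_K st)\ \cong\ (\textnormal{ind}^G_K st)^K,
\]
and one then identifies the right-hand side as $\langle f_n+f_{-n}\mid n\ge 1\rangle$. The paper's proof is exactly these two sentences.

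Your write-up, however, piles on machinery that is not needed and contains a few slips. First, once you invoke Frobenius reciprocity your entire ``third step'' and your ``main obstacle'' disappear: Frobenius reciprocity \emph{is} an isomorphism of vector spaces, so there is nothing further to check about well-definedness, injectivity, surjectivity, or compatibility with the Hecke operator of Proposition~\ref{hecke operator formula}. A $K$-fixed vector in the target automatically determines a $G$-map out of $\textnormal{ind}^G_K 1$, and conversely. Second, a $K$-fixed vector is not ``annihilated by the averaging operator $\sum_{k\in K/I_K}k$''; it is fixed by every element of $K$. Since $K$ is generated by $I_K$ and $\beta_K$, for an $I_K$-fixed $g$ this is the single condition $\beta_K g=g$, not an $S_K$-condition. (The $S_K$-vanishing you observe is a consequence, not the definition, because $|K/I_K|\equiv 0$ in characteristic $p$.) Third, your exclusion of $f_0$ misreads Proposition~\ref{the image of I_1 under S_K and S_-}(2): for $\sigma=st$ one has $S_K f_{0,st}=d_0 f_{0,st}$ with $d_0=-1$, so $S_K f_{0,st}$ does not ``land in the negative part''. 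The clean reason $f_0$ is excluded is immediate: $f_0=[\mathrm{Id},v_0]$ is $K$-fixed iff $v_0\in st^K$, and the Steinberg weight has no nonzero $K$-invariants.
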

\begin{proof}
Frobenius reciprocity says the first space is isomorphic to the space of $K$-invariants of $\textnormal{ind}^G _K st$, and the set of functions $\{ f_n + f_{-n} \mid n\geq 1 \}$ gives a basis of the latter space.
\end{proof}

\section{The universal quotient $\text{ind}^G _K \sigma/ (T_\sigma)$ is not irreducible}

For a weight $\sigma$ of $K$, the representation $\text{ind}^G _K \sigma/ (T_\sigma)$ is usually called the universal supersingular quotient of $G$. This is because any supersingular representation of $G$ is a quotient of $\text{ind}^G _K \sigma/ (T_\sigma)$ for some $\sigma$. We prove in this part that $\text{ind}^G _K \sigma/ (T_\sigma)$ is not irreducible in general.

\subsection{The degenerate case}

We begin with some simple but important observation.

\begin{lemma}\label{f_1 generate the whole rep}
Assume $\textnormal{dim}~\sigma > 1$. If $\overline{f_1}$ generates $\textnormal{ind}^G _K \sigma /(T)$, then $f_1$  generates $\textnormal{ind}^G _K \sigma$.
\end{lemma}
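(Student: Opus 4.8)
The plan is to reduce the statement to showing that $f_0$ lies in the $G$-subrepresentation $W:=\langle G\cdot f_1\rangle$ generated by $f_1$. Indeed, $f_0$ is supported on $K$ with $f_0(\textnormal{Id})=v_0$, so $f_0=[\textnormal{Id},v_0]$; since $\sigma$ is irreducible the $K$-translates $[\textnormal{Id},\sigma(k)v_0]$ span $[\textnormal{Id},\sigma]$, whence $f_0$ generates $\textnormal{ind}^G_K\sigma$ as a $G$-representation. Consequently $f_0\in W$ would force $W=\textnormal{ind}^G_K\sigma$, which is exactly the assertion. First I would record that $f_n\in W$ for every $n\ge 1$: because $\dim\sigma>1$, the constant $c$ of Proposition \ref{hecke operator formula} vanishes, so $T\cdot f_n=f_{n+1}$ for $n\ge 1$; as $T$ is $G$-equivariant we have $T(W)\subseteq W$, and induction from $f_1\in W$ gives $f_n\in W$ for all $n\ge 1$.

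Next I would reformulate the hypothesis. Write $Q:=\textnormal{ind}^G_K\sigma/W$ and let $\bar T$ be the operator induced by $T$. The assumption that $\overline{f_1}$ generates $\textnormal{ind}^G_K\sigma/(T)$ means precisely that $W+T(\textnormal{ind}^G_K\sigma)=\textnormal{ind}^G_K\sigma$, i.e.\ that $\bar T$ is surjective on $Q$; iterating with $T(W)\subseteq W$ gives $W+T^n(\textnormal{ind}^G_K\sigma)=\textnormal{ind}^G_K\sigma$ for all $n\ge 1$. Using $T\cdot f_0=f_{-1}+\lambda_{\beta_K,\sigma}f_1$ together with $f_n\in W$ for $n\ge 1$, one checks $\bar T^{\,n}\overline{f_0}=\overline{f_{-n}}$ in $Q$, so $\overline{f_0}$ and its images under $\bar T$ run through the negative tail. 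Thus everything reduces to the implication: if $\bar T$ is surjective on $Q$, then $Q=0$ (equivalently $\overline{f_0}=0$).

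To prove this implication I would filter $\textnormal{ind}^G_K\sigma$ by support along the Bruhat--Tits tree of $G$: let $V_{\le r}$ consist of the functions supported on vertices at distance $\le r$ from the vertex fixed by $K$ (this is well defined since the distance is left $K$-invariant), and let $Q_{\le r}$ be its image in $Q$. This is an exhaustive filtration with $T(V_{\le r})\subseteq V_{\le r+c_0}$ for a fixed $c_0>0$, and by \eqref{explicit form for T} the operator $T$ is, after translation to the base vertex, a sum over tree-neighbours whose outward part is governed by $j_\sigma$. The key claim is that for $\dim\sigma>1$ this outward (leading) coefficient map is non-degenerate on each graded piece, so $T$ \emph{strictly increases} the maximal support distance of any nonzero element. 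Granting the analogue on $Q$, a minimal-support argument concludes: if $Q\ne 0$, choose $r_0$ minimal with $Q_{\le r_0}\ne 0$ and $0\ne\bar\xi\in Q_{\le r_0}$; surjectivity of $\bar T$ yields $\bar\xi=\bar T\bar\eta$ with $\bar\eta\ne 0$ of top distance $\ell\ge r_0$, so $\bar T\bar\eta$ has top distance $>\ell\ge r_0$, contradicting $\bar\xi\in Q_{\le r_0}$.

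The step I expect to fight with is precisely the non-degeneracy of the leading map after passing to the quotient $Q=\textnormal{ind}^G_K\sigma/W$: the filtration $V_{\le r}$ is not $G$-stable, so one must control how the $G$-submodule $W$ meets it and rule out cancellation of the top-distance term in $Q$. This is also exactly where $\dim\sigma>1$ is used — it forces $c=0$ and keeps the leading coefficient (built from $j_\sigma$ and $\lambda_{\beta_K,\sigma}$) non-degenerate — so the argument should genuinely break for characters, consistent with the separate analysis of the one-dimensional case elsewhere in the paper.
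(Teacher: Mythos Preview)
Your filtration approach has the gap you yourself flag, and it is a real one: the radius filtration $V_{\le r}$ is not $G$-stable, so after passing to $Q=\textnormal{ind}^G_K\sigma/W$ there is no reason the ``top distance'' of a class is well defined, and the strict-increase argument for $\bar T$ cannot be run. You never close this, so the proposal is incomplete.

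More importantly, the whole filtration machinery is unnecessary. The paper's proof is a two-line computation that you are one observation away from. You already reduced to showing $f_0\in W=\langle f_1\rangle_G$ and you already know the hypothesis says $\textnormal{ind}^G_K\sigma=W+(T)$. The missing point is that $(T)\subseteq W$. Indeed $(T)=\langle T f_0\rangle_G$ because $f_0$ generates everything and $T$ is $G$-equivariant; by Proposition~\ref{hecke operator formula}(1), $Tf_0=f_{-1}+\lambda_{\beta_K,\sigma}f_1$; and $f_{-1}=S_K f_1$ by Proposition~\ref{the image of I_1 under S_K and S_-}(1). Since $S_K$ is literally a finite sum of group elements $\sum_{u}u\beta_K$, any $G$-subrepresentation is $S_K$-stable, so $f_{-1}=S_K f_1\in W$. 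Hence $Tf_0\in W$, so $(T)\subseteq W$, and $W+(T)=W$ gives $f_0\in W$ immediately. Note this step does not even use $\dim\sigma>1$; that hypothesis is there only to match $T_\sigma=T$. Your detour through $f_n\in W$ for $n\ge 2$, surjectivity of $\bar T$, and tree filtrations is not needed.
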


\begin{proof}
Suppose $\overline{f_1}$ generates $\textnormal{ind}^G _K \sigma /(T)$. Then we have
\begin{center}
$f_0 \in \langle f_1\rangle_G + (T)= \langle f_1\rangle_G + \langle T f_0\rangle_G=\langle f_1\rangle_G + \langle f_{-1}\rangle_G=\langle f_1\rangle_G + \langle S_K f_1\rangle_G = \langle f_1 \rangle_G$.
\end{center}
Here, we have used Proposition \ref{hecke operator formula} and Proposition \ref{the image of I_1 under S_K and S_-} for the second and third equality. This shows it generates $\textnormal{ind}^G _K \sigma$, and the argument is done.
\end{proof}

\begin{lemma}\label{f_0 +f_1 generate the whole rep}
Assume $\sigma = 1$. If $\overline{f_0 +f_1}$ generates $\textnormal{ind}^G _K 1 /(T+1)$, then $f_0 +f_1$ generates $\textnormal{ind}^G _K 1 $.
\end{lemma}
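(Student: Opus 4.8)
The plan is to imitate the proof of Lemma~\ref{f_1 generate the whole rep}, with $f_1$ replaced by $f_0+f_1$ and $T$ replaced by $T_\sigma=T+1$ (for $\sigma=1=\chi\circ\det$ we are in the third case of the definition of $T_\sigma$). Put $M:=\langle f_0+f_1\rangle_G$. Since $\sigma=1$ is one-dimensional, $f_0=[Id,v_0]$ already generates $\textnormal{ind}^G_K 1$ as a $G$-representation (its $K$-translates recover every $[Id,v]$, whose $G$-translates recover everything), and because $T+1$ commutes with the $G$-action, the $G$-stable image $(T+1)\cdot\textnormal{ind}^G_K 1$ equals $\langle (T+1)f_0\rangle_G$. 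Hence the hypothesis that $\overline{f_0+f_1}$ generates $\textnormal{ind}^G_K 1/(T+1)$ reads $M+\langle (T+1)f_0\rangle_G=\textnormal{ind}^G_K 1$, and the statement reduces to the single membership $(T+1)f_0\in M$: granting it, $\langle (T+1)f_0\rangle_G\subseteq M$, whence $M=\textnormal{ind}^G_K 1$ and $f_0+f_1$ generates.

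First I would evaluate $(T+1)f_0$. By $(1)$ of Proposition~\ref{hecke operator formula}, $Tf_0=f_{-1}+\lambda_{\beta_K,1}f_1$, so $(T+1)f_0=f_0+f_{-1}+\lambda_{\beta_K,1}f_1$. The key computational input is $\lambda_{\beta_K,1}=1$ for the trivial weight; this can be read off from \eqref{explicit form for T} applied to $v_0$, because in the second sum only the trivial coset $u\in N_{n_K+1}/N_{n_K+2}$ produces a term contributing to the $f_1$-component. Granting $\lambda_{\beta_K,1}=1$, I may rewrite $(T+1)f_0=(f_0+f_1)+f_{-1}$. Next I would show $f_{-1}\in M$ by applying $S_K$ to $f_0+f_1$. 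Recall $S_K\cdot f=\sum_{u\in N_{n_K}/N_{n_K+1}}u\beta_K\cdot f$ is a finite sum of $G$-translates of $f$, so $S_K(f_0+f_1)\in\langle f_0+f_1\rangle_G=M$ automatically. It remains to compute $S_K(f_0+f_1)=S_Kf_0+S_Kf_1$: by Proposition~\ref{the image of I_1 under S_K and S_-} one has $S_Kf_1=f_{-1}$, while the new ingredient is $S_Kf_0=0$. Indeed $f_0$ is $K$-invariant (as $\sigma=1$), so every term $u\beta_K\cdot f_0$ equals $f_0$ and $S_Kf_0=\lvert N_{n_K}/N_{n_K+1}\rvert\,f_0=q^{t_K}f_0$, which vanishes in $\overline{\mathbf{F}}_p$ since $q$ is a power of $p$. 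Thus $S_K(f_0+f_1)=f_{-1}$, giving $f_{-1}\in M$, and combined with the previous computation $(T+1)f_0=(f_0+f_1)+f_{-1}\in M$, as required.

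The point needing the most care is the value $\lambda_{\beta_K,1}=1$. In Lemma~\ref{f_1 generate the whole rep} the analogous coefficient was immaterial, because there one was enlarging $\langle f_1\rangle_G$ and any scalar multiple of $f_1$ was absorbed automatically; here $f_1\notin M$ a priori, so the term $(\lambda_{\beta_K,1}-1)f_1$ cannot simply be discarded, and the argument genuinely requires $\lambda_{\beta_K,1}=1$ in order to express $(T+1)f_0$ through $f_0+f_1$ and $f_{-1}$. The vanishing $S_Kf_0=0$ is the other feature absent from Lemma~\ref{f_1 generate the whole rep}; although it is only an elementary characteristic-$p$ observation, it is exactly what collapses $S_K(f_0+f_1)$ onto the single function $f_{-1}$ and thereby makes the absorption succeed.
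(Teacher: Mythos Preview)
Your proof is correct and follows essentially the same route as the paper: both use Proposition~\ref{hecke operator formula} to write $(T+1)f_0=(f_0+f_1)+f_{-1}$ (implicitly using $\lambda_{\beta_K,1}=1$), and both use Proposition~\ref{the image of I_1 under S_K and S_-} to identify $f_{-1}$ with $S_K(f_0+f_1)\in\langle f_0+f_1\rangle_G$, concluding that $(T+1)\subseteq\langle f_0+f_1\rangle_G$. Your write-up is simply more explicit about the two auxiliary facts ($\lambda_{\beta_K,1}=1$ and $S_Kf_0=0$) that the paper absorbs into the citations.
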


\begin{proof}
Suppose $f_0 +f_1$ generates $\textnormal{ind}^G _K 1 /(T+1)$. Then we have
\begin{center}
$f_0 \in \langle f_0 +f_1 \rangle_G +(T+1) =\langle f_0 +f_1 \rangle_G +\langle f_0 +f_1 +f_{-1}\rangle_G= \langle f_0 +f_1 \rangle_G +\langle f_0 +f_1 + S_K (f_0 +f_1)\rangle_G = \langle f_0 +f_1 \rangle_G$.
\end{center}
Here the second and the third equality is by Proposition \ref{hecke operator formula} and Proposition \ref{the image of I_1 under S_K and S_-}. The assertion follows.
\end{proof}

\begin{remark}
The converse of the above two Lemmas are certainly true. However, the function $f_1$ generates the universal quotient $\textnormal{ind}^G _K 1/(T+1)$, even it does not generate $\textnormal{ind}^G _K 1$. Explicitly, we have
\begin{center}
$\overline{f_0}= -\overline{f_1 +S_K f_1}$
\end{center}
Similarly, the function $f_0 +f_1$ does not generate $\textnormal{ind}^G _K st$ but it does generate the universal quotient $\textnormal{ind}^G _K st/(T)$.

\end{remark}

\begin{proposition}
The function $f_1$ does not generate $\textnormal{ind}^G _K st /(T)$.
\end{proposition}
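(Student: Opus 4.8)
The plan is to reduce the statement to a non-generation assertion inside the maximal compact induction itself, and then to kill the candidate generator by an explicit change-of-weight map.

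First, since $\dim st>1$, Lemma~\ref{f_1 generate the whole rep} applies to $\sigma=st$; combined with the trivial reverse implication (a generator of $\textnormal{ind}^G_K st$ maps to a generator of any quotient), this gives that $\overline{f_1}$ generates $\textnormal{ind}^G_K st/(T)$ if and only if $f_1$ generates $\textnormal{ind}^G_K st$. So it suffices to show that $f_1$ does \emph{not} generate $\textnormal{ind}^G_K st$. To establish this I would produce a non-zero $G$-map $S\colon \textnormal{ind}^G_K st\to \textnormal{ind}^G_K 1$ with $S(f_1)=0$. Then $\langle f_1\rangle_G\subseteq \ker S$, while $f_0\notin\ker S$ because $S(f_0)\neq 0$; since $f_0$ itself generates $\textnormal{ind}^G_K st$ (the non-zero vector $v_0$ generates the irreducible $st$ over $K$), this forces $\langle f_1\rangle_G\subsetneq \textnormal{ind}^G_K st$. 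By Proposition~\ref{change of weight} the maps into $\textnormal{ind}^G_K 1$ are exactly the $S_f$ with $S_f(f_0)=f$ for $f\in\langle f_{-n}\mid n\geq 1\rangle_{\overline{\mathbf{F}}_p}$, and $S_f\neq 0$ whenever $f\neq 0$; so the task becomes to find one such $f$ with $S_f(f_1)=0$.

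The decisive step is to transport this condition into $\textnormal{ind}^G_K 1$. I claim $f_1=P\cdot f_0$ for a fixed element $P\in\overline{\mathbf{F}}_p[G]$ \emph{independent of the weight}: using the identity $[g,\sigma(\beta_K)v_0]=g\beta_K\cdot f_0$, the value $\sigma(\beta_K)v_0$ of $f_1$ at $\alpha^{-1}$ is a genuine group translate (no $j_\sigma$ intervenes), so $\alpha\beta_K\cdot f_0=[\alpha,\sigma(\beta_K)v_0]$ is supported on $K\alpha^{-1}$, and spreading it over the double coset $K\alpha^{-1}I_{1,K}$ by summing the translates $u\cdot(\alpha\beta_K\cdot f_0)$ over suitable coset representatives $u\in I_{1,K}$ realises $f_1$ as $P\cdot f_0$ with $P=\big(\sum_{u}u\big)\alpha\beta_K$; the identity~\eqref{useful identity} is what makes this spreading transparent. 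Since $S_f$ is $G$-equivariant it commutes with $P$, whence $S_f(f_1)=P\cdot S_f(f_0)=P\cdot f$. Everything therefore reduces to the internal question: exhibit $0\neq f\in\langle f_{-n}\mid n\geq 1\rangle$ with $P\cdot f=0$ in $\textnormal{ind}^G_K 1$.

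The main obstacle is exactly this last computation. Because $\alpha\beta_K$ is a group element, left translation by it is bijective, so any vanishing of $P\cdot f$ must come entirely from the averaging $\sum_u u$; the content is that the reflected-translated function $\alpha\beta_K\cdot f$ lands in the kernel of this averaging for a suitable $f$. I would compute $P\cdot f_{-n}$ on the basis $\{f_{-n}\}$ using \eqref{useful identity} together with the action of $S_K$ recorded in Proposition~\ref{the image of I_1 under S_K and S_-} (which in particular gives $S_K f_{-n}=-f_{-n}$ and $S_K f_1=f_{-1}$, the bookkeeping I expect to control $P$ along the tower), and so pin down a kernel vector; my guess is that $f=f_{-1}$ already works, i.e. $P\cdot f_{-1}=0$. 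This cancellation is where the specific combinatorics of $U(2,1)$ enter, in line with the remark in the introduction that the analogous function does not exist for $GL_2(F)$.
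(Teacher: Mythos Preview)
Your reduction via Lemma~\ref{f_1 generate the whole rep} and the change-of-weight maps of Proposition~\ref{change of weight} is exactly the paper's strategy, and your observation that $f_1=P\cdot f_0$ for a weight-independent element $P$ of the group algebra is correct. In fact your $P$ is nothing but the operator $S_-$ of the appendix (note $\alpha\beta_K=\beta_K\alpha^{-1}$, so $P=\sum_{u'\in N'_{m_K}/N'_{m_K+1}}u'\beta_K\alpha^{-1}=S_-$), and Proposition~\ref{the image of I_1 under S_K and S_-}(2) already records $S_- f_0=f_1$. The operator you should be invoking in the last paragraph is therefore $S_-$, not $S_K$.

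The genuine gap is in your final step. You seek $0\neq f\in\langle f_{-n}\mid n\geq 1\rangle$ with $S_-\cdot f=0$ \emph{inside} $\textnormal{ind}^G_K 1$, and guess $f=f_{-1}$. But Proposition~\ref{the image of I_1 under S_K and S_-}(2) gives $S_- f_{-n}=f_{n+1}$ for all $n\geq 0$, and the $f_{n+1}$ are linearly independent; hence $S_-$ is \emph{injective} on $\langle f_{-n}\mid n\geq 1\rangle$ and no such $f$ exists. Your averaging heuristic (``any vanishing must come from $\sum_u u$'') fails precisely because, applied to $\alpha\beta_K\cdot f_{-1}$, the summands $u'\cdot[\alpha,\beta_K v_0]$-type functions have pairwise disjoint supports and cannot cancel.

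The fix, which is what the paper does, is to compose $S_{f}$ with the projection $\textnormal{ind}^G_K 1\twoheadrightarrow \textnormal{ind}^G_K 1/(T+1)$. For the trivial weight one has $c=-1$ in Proposition~\ref{hecke operator formula}(2), so $(T+1)f_1=f_2=S_- f_{-1}$; thus with $f=-f_{-1}$ the image $S_f(f_1)=S_-(-f_{-1})=-f_2$ lies in $(T+1)$ and dies in the quotient. Equivalently, $\overline{-f_{-1}}=\overline{f_0+f_1}$ in $\textnormal{ind}^G_K 1/(T+1)$ and $S_-(f_0+f_1)=f_1+c_- f_1=f_1-f_1=0$ there. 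One still has to check the composed map is non-zero (its value on $f_0$ is $\overline{f_0+f_1}\neq 0$). So your outline is salvageable, but only after replacing the target $\textnormal{ind}^G_K 1$ by its universal supersingular quotient.
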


\begin{proof}
By Proposition \ref{change of weight}, we have a \emph{non-zero} $G$-map
\begin{center}
$S_{-f_{-1}}: \textnormal{ind}^G _K st \rightarrow \textnormal{ind}^G _K 1 /(T+1)$
\end{center}
sending $f_0$ to $\overline{-f_{-1}}$, which is just $\overline{f_0 +f_1}$ by Proposition \ref{hecke operator formula}. By Proposition \ref{the image of I_1 under S_K and S_-}, the function $f_1$, which equals $S_- f_0$, is sent to $S_- (\overline{f_0 +f_1}) = \overline{f_1 + (-f_1)}=0$, i.e., the image of $f_1$ in the quotient $\textnormal{ind}^G _K 1 /(T+1)$ is zero.  This implies that $f_1$ does not generate $\textnormal{ind}^G _K st$. By Lemma \ref{f_1 generate the whole rep} $\overline{f_1}$ does not generate $\textnormal{ind}^G _K st /(T)$ either. Note that the image of the function $f_1$ in $\textnormal{ind}^G _K st /(T)$ is non-zero.
\end{proof}

\begin{proposition}
The function $f_0 +f_1$ does not generate $\textnormal{ind}^G _K 1 /(T +1)$.
\end{proposition}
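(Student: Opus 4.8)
The plan is to establish the statement by exhibiting a non-zero $G$-map out of $\textnormal{ind}^G _K 1$ that kills the image of $f_0 + f_1$ in the universal quotient $\textnormal{ind}^G _K 1/(T+1)$, exactly mirroring the preceding proposition which handled the Steinberg weight. By Proposition \ref{basis from the trivial weight to steinberg weight} we have at our disposal $G$-maps from $\textnormal{ind}^G _K 1$ to $\textnormal{ind}^G _K st$ parametrised by the vectors $f_n + f_{-n}$ for $n \geq 1$; the idea is to pick a suitable such map $\Phi$ and compose it (or post-compose appropriately) with the passage to the universal quotient $\textnormal{ind}^G _K st/(T)$, for which we already know from the previous proposition that the image of the relevant distinguished function vanishes.

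First I would record that $f_0 + f_1 = f_0 + S_- f_0$, so that $f_0 + f_1$ is the image of the $I_{1,K}$-invariant function $f_0$ under the operator $\mathrm{Id} + S_-$; this is the bridge letting me track $f_0 + f_1$ through any $G$-map, since any $G$-map commutes with $S_-$ and with $S_K$ by Proposition \ref{the image of I_1 under S_K and S_-}. Next I would take the map $\Phi : \textnormal{ind}^G _K 1 \to \textnormal{ind}^G _K st$ sending $f_{0,1}$ to $f_{1,st} + f_{-1,st}$ (the $n=1$ basis element from Proposition \ref{basis from the trivial weight to steinberg weight}), and compose with the quotient projection $\textnormal{ind}^G _K st \twoheadrightarrow \textnormal{ind}^G _K st/(T)$. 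The key computation is then to evaluate $\overline{\Phi(f_0 + f_1)}$ in $\textnormal{ind}^G _K st/(T)$: using $\overline{T f_{0,st}} = 0$ together with part $(1)$ of Proposition \ref{hecke operator formula} to rewrite $\overline{f_{1,st}}$ and $\overline{f_{-1,st}}$, and using $(2)$ of Proposition \ref{the image of I_1 under S_K and S_-} to track the $S_-$-translates, I expect the combination $\Phi(f_0+f_1) = (\mathrm{Id}+S_-)\Phi(f_0)$ to land in the span of functions whose image in the universal quotient is governed by the vanishing already proved for $f_{1,st}$.

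The conclusion I would then draw is twofold. The resulting composite is a non-zero $G$-map out of $\textnormal{ind}^G _K 1$ whose kernel contains $f_0 + f_1$ but \emph{not} $f_0$ (the latter because $\Phi$ is injective on $K$-invariants and $f_0 \neq f_0 + f_1$); passing to the universal quotient $\textnormal{ind}^G _K 1/(T+1)$, which by Lemma \ref{f_0 +f_1 generate the whole rep} carries the generation question faithfully, this shows $\overline{f_0+f_1}$ cannot generate, because a genuine generator would have non-zero image under any non-zero $G$-map. I would finish by noting, as in the Steinberg case, that the image of $f_0 + f_1$ in $\textnormal{ind}^G _K 1/(T+1)$ is itself non-zero (so the statement is not vacuous), which follows since $f_0+f_1$ is not in the ideal $(T+1)$ by inspecting $I_{1,K}$-invariants.

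The main obstacle I anticipate is the bookkeeping in the key computation: correctly identifying which basis map $\Phi$ from Proposition \ref{basis from the trivial weight to steinberg weight} forces $\overline{\Phi(f_0+f_1)} = 0$ in $\textnormal{ind}^G _K st/(T)$, and handling the constant $c$ and the scalar $\lambda_{\beta_K,\sigma}$ appearing in Proposition \ref{hecke operator formula} for the trivial and Steinberg weights so that the telescoping of $S_-$-translates actually collapses. A secondary subtlety is that the operator $S_-$ and the quotient by $(T)$ must be shown to interact cleanly — specifically that the image of $f_{1,st}$ vanishing in $\textnormal{ind}^G _K st/(T)$ (from the previous proposition) propagates to the full combination — which I expect to require invoking $(1)$ of Proposition \ref{the image of I_1 under S_K and S_-} one more time rather than any new input.
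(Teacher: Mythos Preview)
Your approach is essentially the same as the paper's: take the $G$-map $\Phi: \textnormal{ind}^G_K 1 \to \textnormal{ind}^G_K st$ sending $f_{0} \mapsto f_{1,st} + f_{-1,st}$ (Proposition~\ref{basis from the trivial weight to steinberg weight}), compose with the projection to $\textnormal{ind}^G_K st/(T)$, use $f_0 + f_1 = (\mathrm{Id}+S_-)f_0$ together with the fact that $S_-$ commutes with $G$-maps, and then appeal to Lemma~\ref{f_0 +f_1 generate the whole rep}. The paper does exactly this in one line: the composite sends $f_0 \mapsto \overline{f_{1,st}}$ (since $Tf_{0,st} = f_{-1,st}$, so $\overline{f_{-1,st}} = 0$), hence $f_0 + f_1 \mapsto \overline{f_{1,st} + S_- f_{1,st}} = \overline{f_{1,st} - f_{1,st}} = 0$.

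One correction, however: in your final paragraph you write that the vanishing should propagate from ``the image of $f_{1,st}$ vanishing in $\textnormal{ind}^G_K st/(T)$ (from the previous proposition)''. That is not what the previous proposition establishes---in fact its proof explicitly records that $\overline{f_{1,st}} \neq 0$ in $\textnormal{ind}^G_K st/(T)$. The cancellation comes from a different and simpler source: by $(1)$ of Proposition~\ref{the image of I_1 under S_K and S_-} one has $S_- f_{1,st} = c_- f_{1,st}$, and since $\chi_{st}$ is trivial, $c_- = |L^\times_{q^{4-t_K}}| \equiv -1 \pmod p$, giving $S_- f_{1,st} = -f_{1,st}$ directly. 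Relatedly, to conclude that the composite map is \emph{non-zero} you need precisely that $\overline{f_{1,st}} \neq 0$, not injectivity of $\Phi$ on $K$-invariants; the latter says nothing after passing to the quotient by $(T)$.
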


\begin{proof}
By Proposition \ref{basis from the trivial weight to steinberg weight}, we have a \emph{non-zero} map
\begin{center}
$\textnormal{ind}^G _K 1 \rightarrow \textnormal{ind}^G _K st /(T)$
\end{center}
sending $f_0$ to $\overline{f_1}$, induced from the map sending $f_0$ to $f_1 +f_{-1}$. One then checks that the map sends $f_0 +f_1$, which equals $f_0 + S_- f_0$, to $\overline{f_1 + S_- f_1}= \overline {f_1 + (-f_1)}= 0$. This proves that $f_0 +f_1$ does not generate $\textnormal{ind}^G _K 1$, and by Lemma \ref{f_0 +f_1 generate the whole rep} it does not generate $\textnormal{ind}^G _K 1/(T+1)$ either. Note that the image of the function $f_0 +f_1$ in $\textnormal{ind}^G _K 1 /(T +1)$ is non-zero.
\end{proof}

\begin{corollary}\label{reducibility of supersingular quotient}
Both $\textnormal{ind}^G _K st /(T)$ and $\textnormal{ind}^G _K 1 /(T +1)$ are not irreducible.
\end{corollary}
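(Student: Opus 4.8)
The plan is to read the corollary off directly from the two propositions immediately preceding it, using the elementary criterion that a non-zero smooth representation of $G$ is irreducible precisely when every non-zero vector generates it as a $G$-representation. Equivalently, to establish reducibility it suffices to exhibit in each quotient a single non-zero vector whose $G$-span is a proper submodule; that proper non-zero subrepresentation then witnesses the failure of irreducibility. All of the real content has already been packaged into the preceding propositions, so what remains is bookkeeping.

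For $\textnormal{ind}^G_K st/(T)$, I would first note that $\dim st > 1$, so by the definition of the modified operator we have $T_{st}=T$; hence the quotient named in the statement is genuinely the universal supersingular quotient attached to the weight $st$. The proposition just above supplies the image $\overline{f_1}$ of $f_1$, which is non-zero in $\textnormal{ind}^G_K st/(T)$ and yet does not generate the representation. Consequently $\langle \overline{f_1}\rangle_G$ is a proper non-zero subrepresentation, so $\textnormal{ind}^G_K st/(T)$ is not irreducible.

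For $\textnormal{ind}^G_K 1/(T+1)$, since $\chi_1=\chi\circ\det$ for the trivial character, the definition of $T_\sigma$ gives $T_1=T+1$, so this too is the universal supersingular quotient attached to the weight $1$. The other preceding proposition provides the image $\overline{f_0+f_1}$, which is non-zero but fails to generate $\textnormal{ind}^G_K 1/(T+1)$. Its $G$-span is therefore a proper non-zero subrepresentation, and reducibility follows in the same way.

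I do not expect any genuine obstacle at the level of the corollary itself: the substance lies entirely in the two propositions, and beneath them in the explicit non-zero $G$-maps built in Proposition \ref{change of weight} and Proposition \ref{basis from the trivial weight to steinberg weight} that annihilate the relevant images. The only points I would be careful to flag are the two identifications $T_{st}=T$ and $T_1=T+1$ coming from the definition of $T_\sigma$, which guarantee that the two objects named in the statement really are the universal supersingular quotients for $st$ and $1$, together with the observation that both quotients are themselves non-zero (as witnessed by $\overline{f_1}$ and $\overline{f_0+f_1}$), so that ``not irreducible'' means properly reducible rather than vacuously zero.
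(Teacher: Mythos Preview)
Your proposal is correct and mirrors the paper's own proof, which simply states that the assertion follows from the two preceding propositions. The extra bookkeeping you include---the identifications $T_{st}=T$ and $T_1=T+1$, and the explicit reduction to the criterion that irreducibility is equivalent to every non-zero vector generating---is accurate elaboration of what the paper leaves implicit.
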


\begin{proof}
The assertion follows from last two Propositions.
\end{proof}

\smallskip

\subsection{The regular case that $K=K_1$ and $q=p$}

In the case that $K=K_1$ and $q=p$, we show that $\textnormal{ind}^G_K \sigma /(T_\sigma)$ contains the weight $\sigma^s$. This is analogous to $GL_2 (F)$ for a totally ramified extension $F/\mathbf{Q}_p$ (\cite{Sch14}).  Assume $\sigma$ is a weight of $K$ satisfying that $\chi_\sigma \neq \chi^s _\sigma$.

\begin{lemma}\label{a basis of Kf_1}
The representation $\langle K\cdot f_1\rangle$ has a linear basis given by
\begin{center}
$\{f_1, u\beta_K f_1 \mid u \in N_{n_K}/N_{n_K +1}\}$
\end{center}
\end{lemma}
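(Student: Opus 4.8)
The goal is to exhibit an explicit spanning set for the subrepresentation $\langle K \cdot f_1 \rangle$ and then verify it is linearly independent. The plan is to analyze how $K$ acts on the function $f_1$, using the structure of $K$ relative to the Iwahori and pro-$p$ Iwahori subgroups. Since $f_1$ is $I_{1,K}$-invariant by construction, and since $\chi_\sigma \neq \chi^s_\sigma$ controls how the Iwahori $I_K$ moves $f_1$, the orbit of $f_1$ under $K$ should be governed by a finite set of coset representatives for $K$ modulo the stabilizer of the line spanned by $f_1$.

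The key computational step is a Bruhat-type decomposition of $K$. The natural approach is to use that $K$ is generated by $I_K$ together with the element $\beta_K$ and the unipotent part, and to recall (as in the proof of Proposition \ref{change of weight}) that a set of representatives for $K/I_K$ is given by $\{Id\} \cup \{[u]\beta_K \mid u \in N_{n_K}/N_{n_K+1}\}$. First I would observe that $I_K$ stabilizes the line $\overline{\mathbf{F}}_p \cdot f_1$: indeed $I_{1,K}$ fixes $f_1$ pointwise, and $I_K/I_{1,K}$ acts through the character $\chi_\sigma$, scaling $f_1$. Hence $\langle K \cdot f_1 \rangle$ is spanned by the translates of $f_1$ by the chosen coset representatives, namely by $f_1$ together with the functions $u\beta_K f_1$ for $u$ ranging over $N_{n_K}/N_{n_K+1}$. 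This already yields the claimed spanning set; the number of elements is $1 + q^{t_K}$.

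The remaining and more delicate point is \emph{linear independence}. Here I would argue by examining the supports of the functions involved. The function $f_1$ is supported on $K\alpha^{-1}I_{1,K}$, so $u\beta_K f_1$ is supported on $u\beta_K K \alpha^{-1} I_{1,K}$; since $\beta_K \in K$ these supports all lie in a union of cosets, and I would separate the $f_1$-term from the $u\beta_K f_1$-terms by noting that $f_1$ is supported on a $K$-coset of a different shape (one not involving the reflection $\beta_K$ conjugating $\alpha^{-1}$) than the translates $u\beta_K f_1$. For distinct $u, u' \in N_{n_K}/N_{n_K+1}$, I would show $u\beta_K f_1$ and $u'\beta_K f_1$ have disjoint supports, using the identity \eqref{useful identity} to compute $\beta_K \alpha^{-1}$ and the fact that the $N_{n_K}/N_{n_K+1}$ cosets are genuinely distinct modulo the pro-$p$ Iwahori. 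The hypothesis $\chi_\sigma \neq \chi_\sigma^s$ ensures the $\beta_K$-twisted vectors do not collapse back onto $f_1$ via a character coincidence.

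The main obstacle I anticipate is precisely this support bookkeeping: one must confirm that translating $f_1$ by the $q^{t_K}$ representatives $u\beta_K$ produces functions with pairwise-disjoint (or at least linearly independent) supports, and that none of them coincides with a scalar multiple of $f_1$ itself. This is a concrete but fiddly coset computation in $G$, best handled by the identity \eqref{useful identity} together with the explicit description of $N_{n_K}$, $I_{1,K}$, and the action of $\alpha$; I would expect the condition $\chi_\sigma \neq \chi_\sigma^s$ to be exactly what rules out the degenerate coincidences, which is why the lemma is stated under that standing assumption.
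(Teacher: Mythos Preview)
Your approach is essentially the paper's: use the coset representatives $\{Id\}\cup\{u\beta_K\mid u\in N_{n_K}/N_{n_K+1}\}$ for $K/I_K$ to get a spanning set, then prove linear independence via disjoint supports. Two corrections are in order, however.

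First, a minor slip: $I_K$ acts on $f_1$ through $\chi^s_\sigma$, not $\chi_\sigma$ (because $f_1(\alpha^{-1})=\beta_K v_0$, and conjugation by $\beta_K$ swaps the character). This does not affect the argument, since all that matters is that $I_K$ stabilises the line $\overline{\mathbf{F}}_p\cdot f_1$.

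Second, and more substantively, you misidentify the role of the hypothesis $\chi_\sigma\neq\chi^s_\sigma$. The paper explicitly remarks that this lemma does \emph{not} require that assumption: the linear independence is a pure support statement and holds for every weight $\sigma$. Concretely, the paper observes that
\[
\beta_K f_1=\sum_{u\in N_{n_K+1}/N_{n_K+2}}[u\alpha^{-1},v_0],
\]
so $u\beta_K f_1$ is supported on the single $K$-coset $Ku^{-1}$ with $u\in N_{n_K}/N_{n_K+1}$ labelling distinct cosets, while $f_1$ itself is supported on $K\alpha I_{1,K}$; these are pairwise disjoint regardless of $\chi_\sigma$. Your planned route through the identity \eqref{useful identity} would also work, but the explicit formula above is more direct and makes transparent that no character condition is needed.
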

\begin{proof}
This is by direct computation:

1) first, the Iwahori subgroup $I_K$ acts as $\chi^s _\sigma$ on the function $f_1$. A set of representatives for $K/ I_{K}$ is given by
$\{Id, u\beta_K \mid, u  \in N_{n_K}/N_{n_K +1}\}$. We conclude the representation is spanned by the set $\{f_1, u\beta_K f_1 \mid u \in N_{n_K}/N_{n_K +1}\}$.

2) second, we observe that the functions in the given set have disjoint supports. Recall that
\begin{center}
$\beta_K f_1 =\sum_{u\in N_{n_K +1} /N_{n_K +2}} [u\alpha^{-1}, v_0]$,
\end{center}
then the assertion is clear.
\end{proof}

\begin{corollary}\label{the K-rep generated by f_1}
The representation $\langle K \cdot f_1\rangle$ is isomorphic to $\textnormal{Ind}^{\Gamma_K} _\mathbb{B} \chi^s _\sigma$.
\end{corollary}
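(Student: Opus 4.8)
The plan is to recognize $\langle K\cdot f_1\rangle$ as an inflated representation of the finite group $\Gamma_K = K/K^1$ and then to pin it down by Frobenius reciprocity, letting Lemma \ref{a basis of Kf_1} supply the decisive dimension count.

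First I would check that $K^1$ acts trivially on $\langle K\cdot f_1\rangle$, so that the representation factors through $\Gamma_K$. Since $f_1$ is $I_{1,K}$-invariant and $K^1\subseteq I_{1,K}$, the subgroup $K^1$ fixes $f_1$; and as $K^1$ is normal in $K$, for any $g\in K$ we have $K^1(g\cdot f_1)= g\,(g^{-1}K^1 g)\cdot f_1 = g K^1\cdot f_1 = g\cdot f_1$. Hence $K^1$ fixes every $K$-translate of $f_1$ and acts trivially on the whole space $\langle K\cdot f_1\rangle$, which is therefore a representation of $\Gamma_K$.

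Next, recall from the proof of Lemma \ref{a basis of Kf_1} that $I_K$ acts on $f_1$ through the character $\chi^s_\sigma$. As $\chi^s_\sigma$ is trivial on $I_{1,K}$ and $\mathbb{B} = I_K/K^1$, the vector $f_1$ is a $\chi^s_\sigma$-eigenvector for $\mathbb{B}$ inside the $\Gamma_K$-representation $\langle K\cdot f_1\rangle$. By Frobenius reciprocity this eigenvector corresponds to a non-zero $\Gamma_K$-equivariant map
\[
\Phi:\ \textnormal{Ind}^{\Gamma_K}_{\mathbb{B}}\chi^s_\sigma \longrightarrow \langle K\cdot f_1\rangle
\]
carrying the standard generator (the function supported on $\mathbb{B}$ with value $1$) to $f_1$. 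Its image is a $\Gamma_K$-subrepresentation containing $f_1$, hence all of $\langle K\cdot f_1\rangle$, so $\Phi$ is surjective.

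Finally I would compare dimensions. By Lemma \ref{a basis of Kf_1} the space $\langle K\cdot f_1\rangle$ has dimension $1 + \mid N_{n_K}/N_{n_K+1}\mid = 1 + q^{t_K}$, which is exactly the index $[K:I_K]=[\Gamma_K:\mathbb{B}]$, the dimension of $\textnormal{Ind}^{\Gamma_K}_{\mathbb{B}}\chi^s_\sigma$. A surjection between finite-dimensional spaces of equal dimension is an isomorphism, and the claim follows. The only point requiring a little care is the consistency of $\Phi$ on the full basis; but this is immediate from the disjoint-support observation in Lemma \ref{a basis of Kf_1}, which shows the translates $\{f_1,\ u\beta_K f_1\}$ are precisely the images of the standard basis of the induced module under $\Phi$. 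Since all the substantive computation was already done in Lemma \ref{a basis of Kf_1}, I expect no genuine obstacle here, merely the repackaging just described.
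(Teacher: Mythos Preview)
Your proposal is correct and follows essentially the same route as the paper: obtain a surjection $\textnormal{Ind}^{\Gamma_K}_{\mathbb{B}}\chi^s_\sigma \to \langle K\cdot f_1\rangle$ from Frobenius reciprocity (using that $I_K$ acts on $f_1$ via $\chi^s_\sigma$), then conclude by the dimension count from Lemma~\ref{a basis of Kf_1}. The paper's proof is just a two-sentence version of what you wrote; your explicit verification that $K^1$ acts trivially is a welcome clarification that the paper leaves implicit.
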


\begin{proof}
By Frobenius reciprocity, there is a surjective $K$-map from $\textnormal{Ind}^{\Gamma_K} _\mathbb{B} \chi^s _\sigma$ to $\langle K\cdot f_1\rangle$, sending the function $\varphi_{\chi^s _\sigma}$ to $f_1$. By Lemma \ref{a basis of Kf_1} the representation $\langle K\cdot f_1\rangle$ has the same dimension as that of $\textnormal{Ind}^{\Gamma_K} _\mathbb{B} \chi^s _\sigma$, then the assertion in the statement follows.
\end{proof}

\begin{remark}
We note that Corollary \ref{the K-rep generated by f_1} and Lemma \ref{a basis of Kf_1} does not depend on the assumption of this part.
\end{remark}

\begin{remark}
Using Corollary \ref{the K-rep generated by f_1}, we can see that the $K$-socle of $\textnormal{Ind}^{\Gamma_K} _\mathbb{B} \chi^s _\sigma$ is isomorphic to weight $\sigma$, given by the representation generated by $f_{-1}$.
\end{remark}

\begin{lemma}\label{finite principal series of length 2}
The finite principal series $\textnormal{Ind}^{\Gamma_K} _\mathbb{B} \chi_\sigma$ is of length two, with a unique quotient $\sigma$ and a unique subrepresentation $\sigma^s$, i.e., there is a non-split short exact sequence of $K$-representations:
\begin{center}
$0\rightarrow \sigma^s \rightarrow \textnormal{Ind}^{\Gamma_K} _\mathbb{B} \chi_\sigma \rightarrow \sigma \rightarrow 0$.
\end{center}
\end{lemma}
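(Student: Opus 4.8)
The plan is to read off the socle and the cosocle of $\textnormal{Ind}^{\Gamma_K}_{\mathbb{B}}\chi_\sigma$ from Frobenius reciprocity, and then to force the length to be exactly two by a dimension count in which the hypothesis $q=p$ is essential. Since here $\Gamma_K= U(1,1)\times U(1)(k_E/k_F)$ and the factor $U(1)$ is central, it acts on everything by a single character and plays no role; I may therefore replace $\Gamma_K$ by the rank-one group $U(1,1)(k_E/k_F)$, writing $T$ for its diagonal torus and keeping $\mathbb{U}$ for the unipotent radical of $\mathbb{B}$. The structural input I will use---already invoked in Section \ref{sec: notations} and due to \cite[Theorem 6.12]{C-E2004}---is that for every irreducible $\overline{\mathbf{F}}_p$-representation $\tau$ of $\Gamma_K$ the invariants $\tau^{\mathbb{U}}$ and the coinvariants $\tau_{\mathbb{U}}$ are one-dimensional, with $T$ acting through $\chi_\tau$ on $\tau^{\mathbb{U}}$ and through $\chi^s_\tau$ on $\tau_{\mathbb{U}}$.

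First I would identify the cosocle and socle. As induction and coinduction agree for the finite group $\Gamma_K$, Frobenius reciprocity gives $\textnormal{Hom}_{\Gamma_K}(\textnormal{Ind}^{\Gamma_K}_{\mathbb{B}}\chi_\sigma,\tau)\cong\textnormal{Hom}_T(\chi_\sigma,\tau^{\mathbb{U}})$ and $\textnormal{Hom}_{\Gamma_K}(\tau,\textnormal{Ind}^{\Gamma_K}_{\mathbb{B}}\chi_\sigma)\cong\textnormal{Hom}_T(\tau_{\mathbb{U}},\chi_\sigma)$ for every irreducible $\tau$. By the structural input the first space is non-zero exactly when $\chi_\tau=\chi_\sigma$, i.e. $\tau\cong\sigma$, and the second exactly when $\chi^s_\tau=\chi_\sigma$, i.e. $\tau\cong\sigma^s$; moreover each non-zero Hom-space is one-dimensional. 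Hence the cosocle of $\textnormal{Ind}^{\Gamma_K}_{\mathbb{B}}\chi_\sigma$ is the single copy of $\sigma$ and its socle is the single copy of $\sigma^s$. Because $\chi_\sigma\neq\chi^s_\sigma$ by hypothesis we have $\sigma\not\cong\sigma^s$, so at this stage $\textnormal{Ind}^{\Gamma_K}_{\mathbb{B}}\chi_\sigma$ is already known to be non-semisimple of length at least two, with $\sigma^s$ as its unique irreducible subrepresentation and $\sigma$ as its unique irreducible quotient.

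Next I would show the length is exactly two. The module has dimension $[\Gamma_K:\mathbb{B}]=q+1=p+1$. Through the exceptional isomorphism $\mathrm{SU}(1,1)(k_E/k_F)\cong\mathrm{SL}_2(k_F)$, and using $q=p$, the irreducible $\overline{\mathbf{F}}_p$-weights of $U(1,1)(k_E/k_F)$ have dimension $r+1$ for a unique $r\in\{0,\dots,p-1\}$ (their restrictions to $\mathrm{SU}(1,1)$ being the symmetric powers $\mathrm{Sym}^r$). Matching the character by which $T$ acts on $\mathbb{U}$-invariants identifies the pair $\{\sigma,\sigma^s\}$ with $\{\mathrm{Sym}^r,\mathrm{Sym}^{p-1-r}\}$ for some $r\in\{1,\dots,p-2\}$, $r\neq(p-1)/2$---the excluded values $r\in\{0,(p-1)/2,p-1\}$ being precisely the degenerate case $\chi_\sigma=\chi^s_\sigma$. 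Consequently $\dim\sigma+\dim\sigma^s=(r+1)+(p-r)=p+1=\dim\textnormal{Ind}^{\Gamma_K}_{\mathbb{B}}\chi_\sigma$. Quotienting by the socle $\sigma^s$ therefore leaves a module of dimension $\dim\sigma$ whose cosocle is $\sigma$, so that quotient must equal $\sigma$; this yields the exact sequence $0\to\sigma^s\to\textnormal{Ind}^{\Gamma_K}_{\mathbb{B}}\chi_\sigma\to\sigma\to0$ and length two. Non-splitness is then automatic: a splitting would give $\sigma\oplus\sigma^s$ inside the socle, contradicting the simplicity of the socle found above.

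The step I expect to be the main obstacle is the dimension count, i.e. the equality $\dim\sigma+\dim\sigma^s=p+1$, and it is exactly here that $q=p$ is indispensable. For $q=p^f$ with $f>1$ the irreducible weights are tensor products of Frobenius twists of symmetric powers, their dimensions no longer add up to $q+1$, and $\textnormal{Ind}^{\Gamma_K}_{\mathbb{B}}\chi_\sigma$ typically acquires further composition factors between its socle and cosocle. The cleanest way to make the count rigorous seems to be to carry out the weight classification of $U(1,1)(k_E/k_F)$ explicitly via the isomorphism with $\mathrm{SL}_2(k_F)$, together with the bookkeeping that pins down which symmetric power carries $\chi_\sigma$ on its $\mathbb{U}$-invariants; alternatively one could quote the known composition series of mod-$p$ principal series for rank-one finite groups of Lie type.
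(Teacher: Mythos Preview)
Your argument is correct and supplies what the paper does not: the paper's ``proof'' is a one-line citation to \cite[Lemma 5.8]{Karol-Peng2012}, whereas you reconstruct the standard argument behind that citation---Frobenius reciprocity to pin down the simple socle $\sigma^s$ and simple cosocle $\sigma$, followed by the dimension count $\dim\sigma+\dim\sigma^s=p+1$ via the classification of irreducibles for $\mathrm{SL}_2(\mathbb{F}_p)\cong \mathrm{SU}(1,1)(k_E/k_F)$. This is precisely the content one expects the cited lemma to contain, so the two approaches are not really different; yours is simply explicit where the paper defers to the literature.

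One small correction that does not affect the outcome: your list of excluded parameters $r\in\{0,(p-1)/2,p-1\}$ is the list for the $\mathrm{SL}_2$-torus, but the hypothesis $\chi_\sigma\neq\chi_\sigma^s$ lives on the larger $U(1,1)$-torus $\cong k_E^\times$. The endpoints $r=0$ and $r=p-1$ are genuinely excluded (if $\sigma$ is a character or a twist of Steinberg then $\chi_\sigma$ is automatically Weyl-invariant), but $r=(p-1)/2$ need not be: two distinct weights of $U(1,1)$ can restrict to the same $\mathrm{Sym}^{(p-1)/2}$ on $\mathrm{SU}(1,1)$ while being swapped by $s$. This is harmless for your argument, since in that case both $\sigma$ and $\sigma^s$ have dimension $(p+1)/2$ and the sum is still $p+1$. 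You also correctly identify where $q=p$ is used: for $q=p^f$ with $f>1$ the Steinberg tensor product theorem gives weights whose dimensions no longer pair to $q+1$, and extra composition factors appear between socle and cosocle.
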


\begin{proof}
This is \cite[Lemma 5.8]{Karol-Peng2012}. We note this crucially depends on the assumption that $K=K_1$ and $q=p$.
\end{proof}

\begin{proposition}
The universal quotient $\textnormal{ind}^G_K \sigma /(T)$ contains the weight $\sigma^s$.
\end{proposition}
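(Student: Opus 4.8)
The plan is to exhibit $\sigma^s$ as the image in the universal quotient of the $K$-representation generated by $f_1$. By Corollary \ref{the K-rep generated by f_1} we have $\langle K\cdot f_1\rangle\cong\textnormal{Ind}^{\Gamma_K}_\mathbb{B}\chi^s_\sigma$, and applying Lemma \ref{finite principal series of length 2} to the character $\chi^s_\sigma$ (so that the base weight is $\sigma^s$ and $(\sigma^s)^s=\sigma$) gives a non-split short exact sequence of $K$-representations
\begin{equation*}
0\to\sigma\to\langle K\cdot f_1\rangle\to\sigma^s\to0,
\end{equation*}
whose socle $\sigma$ is generated by $f_{-1}=S_K f_1$ and whose cosocle is $\sigma^s$. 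Write $\phi\colon\langle K\cdot f_1\rangle\to\textnormal{ind}^G_K\sigma/(T)$ for the $K$-map $v\mapsto\overline v$; its image $\langle K\cdot\overline{f_1}\rangle$ is a quotient of this length-two module, hence lies in $\{0,\sigma^s,\langle K\cdot f_1\rangle\}$. I would show it equals $\sigma^s$, which realizes $\sigma^s$ as a subrepresentation of the quotient.

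First I would kill the socle. The diagonal part $H_0$ of $I_K$ commutes with $\alpha$, and conjugating a function $f_n$ by $h\in H_0$ leaves its support $K\alpha^{-n}I_{1,K}$ unchanged while scaling its value; a direct check gives that $I_K$ acts on $f_n$ through the character $\chi_\sigma$ for $n\le0$ and through $\chi^s_\sigma$ for $n\ge1$. In particular $f_{-1}$ is a $\chi_\sigma$-eigenvector and $f_1$ a $\chi^s_\sigma$-eigenvector. Since $T$ is $G$-equivariant it preserves $I_K$-eigencharacters, so $Tf_0$ is again $\chi_\sigma$-isotypic; comparing with part $(1)$ of Proposition \ref{hecke operator formula}, the $\chi^s_\sigma$-component $\lambda_{\beta_K,\sigma}f_1$ must vanish, whence $\lambda_{\beta_K,\sigma}=0$ under the hypothesis $\chi_\sigma\neq\chi^s_\sigma$. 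Thus $f_{-1}=Tf_0\in(T)$, so $\overline{f_{-1}}=0$ and $\sigma\subseteq\ker\phi$. Consequently $\langle K\cdot\overline{f_1}\rangle\in\{0,\sigma^s\}$, and it is $\sigma^s$ exactly when $\overline{f_1}\neq0$. The whole proof therefore reduces to the non-vanishing $\overline{f_1}\neq0$, i.e.\ $f_1\notin(T)$.

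To approach this I would compute the image of $T$ on the pro-$p$-Iwahori invariants. As $(T)=\operatorname{im}T$, the operator $T$ acts as zero on the quotient, so $(\textnormal{ind}^G_K\sigma/(T))^{I_{1,K}}$ is a quotient of $\langle f_n\rangle/T\langle f_n\rangle$. Using $\lambda_{\beta_K,\sigma}=0$ and part $(2)$ of Proposition \ref{hecke operator formula}, the image splits along the two eigencharacters as
\begin{equation*}
T\langle f_n\rangle=\langle f_n\mid n\le-1\rangle\ \oplus\ \langle c f_k+f_{k+1}\mid k\ge1\rangle,
\end{equation*}
and whether or not $c$ vanishes, the cokernel is two-dimensional, spanned by $\overline{f_0}$ and $\overline{f_1}$ (in the $\chi^s_\sigma$-part all $f_k$ become proportional to $f_1$ modulo the image, and $f_1$ itself is not hit). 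Since $f_0$ generates the non-zero quotient we have $\overline{f_0}\neq0$, which already pins down the $\chi_\sigma$-part: $(T)\cap\langle f_n\mid n\le0\rangle=\langle f_n\mid n\le-1\rangle$. Thus $\overline{f_0}\neq0$ and $\overline{f_1}$ survives at least modulo $T\langle f_n\rangle$.

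The main obstacle is the $\chi^s_\sigma$-part: a priori $(T)^{I_{1,K}}$ could be strictly larger than $T\langle f_n\rangle$, since $(T)$ is the \emph{$G$-module} generated by $\operatorname{im}T$ and, $I_{1,K}$ being pro-$p$, one cannot average a preimage into the invariants in characteristic $p$; ruling out a hidden $\chi^s_\sigma$-eigen invariant meeting $f_1$ is exactly the delicate step. To settle it I would invoke the pro-$p$-Iwahori Hecke module structure of $(\textnormal{ind}^G_K\sigma)^{I_{1,K}}$ together with the operators of Proposition \ref{the image of I_1 under S_K and S_-}: as $T$ acts by zero, $(\textnormal{ind}^G_K\sigma/(T))^{I_{1,K}}$ is a module over the supersingular quotient of this Hecke algebra, and for a regular character $\chi_\sigma\neq\chi^s_\sigma$ this module is two-dimensional, with one line for each of $\chi_\sigma,\chi^s_\sigma$. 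Concretely, since $f_1=S_-f_0$, transporting the relation $\overline{f_0}\neq0$ through $S_-$ (and back through its companion operator) forces the $\chi^s_\sigma$-line $\overline{f_1}$ to be non-zero. This gives $\langle K\cdot\overline{f_1}\rangle\cong\sigma^s$ and hence $\sigma^s\hookrightarrow\textnormal{ind}^G_K\sigma/(T)$, as desired.
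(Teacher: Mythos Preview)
Your strategy coincides with the paper's: both identify $\langle K\cdot f_1\rangle\cong\textnormal{Ind}^{\Gamma_K}_\mathbb{B}\chi^s_\sigma$ via Corollary~\ref{the K-rep generated by f_1}, use Lemma~\ref{finite principal series of length 2} to get the non-split length-two filtration with socle $\sigma$ (generated by $f_{-1}=S_Kf_1$) and cosocle $\sigma^s$, and then check that exactly the socle dies in the quotient. Your reduction of the whole statement to the single claim $\overline{f_1}\neq 0$ is correct and well argued; in particular your eigencharacter proof that $\lambda_{\beta_K,\sigma}=0$ is fine.

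The gap is in your last paragraph. The ``transport through $S_-$'' does not force $\overline{f_1}\neq0$: from $f_1=S_-f_0$ you only obtain $\overline{f_1}=S_-\overline{f_0}$, and nothing prevents $S_-$ from annihilating $\overline{f_0}$. Going ``back through a companion operator'' does not help either, since $S_KS_-\overline{f_0}=\overline{S_Kf_1}=\overline{f_{-1}}=0$ already. Your fallback---that $(\textnormal{ind}^G_K\sigma/(T))^{I_{1,K}}$ is two-dimensional as a supersingular Hecke module---is precisely the kind of structural fact the paper does \emph{not} establish; you correctly flagged that $(T)^{I_{1,K}}$ can exceed $T\bigl((\textnormal{ind}^G_K\sigma)^{I_{1,K}}\bigr)$, and nothing in the paper lets you control this.

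The paper closes this step by a support argument rather than through $I_{1,K}$-invariants. By Lemma~\ref{a basis of Kf_1}, every function in $\langle K\cdot f_1\rangle$ is supported on the single double coset $K\alpha K$. Now one uses that $T$ strictly raises the radius of support: if $g\neq0$ has a nonzero component on $K\alpha^N K$ with $N\geq 1$, then $Tg$ has a nonzero component on $K\alpha^{N+1}K$ (this is the standard ``leading term'' property of the spherical Hecke operator, implicit in the explicit formula~\eqref{explicit form for T} and underlying the isomorphism $\mathcal{H}(K,\sigma)\cong\overline{\mathbf{F}}_p[T]$). Hence if $Tg\in\langle K\cdot f_1\rangle\subset V_1$ then $g$ must already lie in $[Id,\sigma]$, giving
\[
(T)\cap\langle K\cdot f_1\rangle \;=\; T([Id,\sigma])\;\cong\;\sigma,
\]
which is exactly the socle. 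Therefore $\langle K\cdot\overline{f_1}\rangle\cong\langle K\cdot f_1\rangle/\sigma\cong\sigma^s$ directly, and in particular $\overline{f_1}\neq0$. This bypasses entirely the question of what $(T)^{I_{1,K}}$ looks like.
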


\begin{proof}
The representation $\langle K\cdot \overline{f_1}\rangle$ is by definition equal to $\langle K \cdot f_1\rangle /(T)\cap \langle K \cdot f_1\rangle$. We note that $(T)\cap \langle K \cdot f_1\rangle$ is simply $T([Id, \sigma])$ whence isomorphic to $\sigma$ (explicitly it is the $K$-representation generated by the function $f_{-1}= S_K f_1$). Thus, we conclude by Lemma \ref{finite principal series of length 2} and Corollary \ref{the K-rep generated by f_1} that $\langle K\cdot \overline{f_1}\rangle \cong \sigma^s$.
\end{proof}

\begin{corollary}\label{reducibility of supersingular quotient: the case K=K_1 and q=p}
The representation $\textnormal{ind}^G _K \sigma /(T)$ is not irreducible.
\end{corollary}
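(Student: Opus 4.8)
The plan is to deduce reducibility directly from the preceding Proposition, which exhibits a non-zero $K$-subrepresentation $\langle K\cdot\overline{f_1}\rangle\cong\sigma^s$ inside $\textnormal{ind}^G_K\sigma/(T)$; since $\chi_\sigma\neq\chi^s_\sigma$ this weight is distinct from $\sigma$. The whole quotient is generated as a $G$-representation by the image of $[Id,\sigma]$, equivalently by $\overline{f_0}$, which carries the $H_0$-character $\chi_\sigma$. I would prove non-irreducibility by showing that the non-zero vector $\overline{f_1}$ fails to generate the quotient, so that $\langle G\cdot\overline{f_1}\rangle$ is a proper non-zero $G$-subrepresentation.

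To show this non-generation I would first record the analogue of Lemma \ref{f_1 generate the whole rep} valid in the present regime: if $\overline{f_1}$ generated $\textnormal{ind}^G_K\sigma/(T)$, then precisely the computation of that Lemma --- using $Tf_0=f_{-1}+\lambda_{\beta_K,\sigma}f_1$ from Proposition \ref{hecke operator formula} together with $f_{-1}=S_Kf_1$ from Proposition \ref{the image of I_1 under S_K and S_-} --- would force $f_0\in\langle G\cdot f_1\rangle$, i.e.\ $f_1$ would generate $\textnormal{ind}^G_K\sigma$ itself. The hypothesis $\dim\sigma>1$ in Lemma \ref{f_1 generate the whole rep} is not essential for this implication; it enters only through absorbing the term $\lambda_{\beta_K,\sigma}f_1$, which lies in $\langle G\cdot f_1\rangle$ in any case. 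Thus it suffices to prove that $f_1$ does not generate $\textnormal{ind}^G_K\sigma$.

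For the latter I would mimic the mechanism of Proposition \ref{change of weight} and the two degenerate-case Propositions: produce a non-zero $G$-map out of $\textnormal{ind}^G_K\sigma$ that annihilates $f_1$. Concretely, the inclusion $\sigma^s\hookrightarrow\textnormal{ind}^G_K\sigma^s/(T)$ --- the preceding Proposition applied with $\sigma$ replaced by $\sigma^s$, which is legitimate since $(\sigma^s)^s=\sigma$ and $\chi^s_\sigma\neq\chi_\sigma$ --- yields by Frobenius reciprocity a non-zero $G$-map $\textnormal{ind}^G_K\sigma\to\textnormal{ind}^G_K\sigma^s/(T)$ sending $f_{0,\sigma}$ into the positive part $\langle\overline{f_{k,\sigma^s}}\mid k\geq1\rangle$. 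Tracking $f_{1,\sigma}=S_-f_{0,\sigma}$ through this map and computing the action of $S_-$ on the target via part $(1)$ of Proposition \ref{the image of I_1 under S_K and S_-}, the image of $f_{1,\sigma}$ is forced to vanish, exactly as $f_1\mapsto 0$ in the degenerate case. Since the map is non-zero, $f_1$ cannot generate $\textnormal{ind}^G_K\sigma$, and the argument closes.

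The main obstacle is the construction and control of this change-of-weight $G$-map for a general regular weight: the Hom-space computations of Section~3 were carried out only for the pair $\{1,st\}$, so here one needs the analogous description of $\textnormal{Hom}_G(\textnormal{ind}^G_K\sigma,\textnormal{ind}^G_K\sigma^s/(T))$, or at least one non-zero element of it whose value on $f_{0,\sigma}$ lands in the positive part and is killed by $S_-$. Equivalently, one may phrase everything on pro-$p$-Iwahori invariants: compute $(\textnormal{ind}^G_K\sigma/(T))^{I_{1,K}}$ from the $f_n$-basis and Proposition \ref{hecke operator formula} (the relations $\overline{Tf_n}=0$ collapse all $\overline{f_n}$ with $|n|\geq2$ and express $\overline{f_{-1}}$ as a multiple of $\overline{f_1}$, leaving the two $H_0$-eigenlines $\overline{f_0}$ and $\overline{f_1}$ with distinct characters $\chi_\sigma$ and $\chi^s_\sigma$), and then verify that the pro-$p$-Iwahori--Hecke action cannot carry $\overline{f_1}$ onto $\overline{f_0}$. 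The delicate point in this second route is that the $I_{1,K}$-invariants functor is only left exact, so identifying $\langle G\cdot\overline{f_1}\rangle^{I_{1,K}}$ with the Hecke submodule generated by $\overline{f_1}$ --- and thereby confirming $\overline{f_0}\notin\langle G\cdot\overline{f_1}\rangle$ --- is exactly where the real work lies.
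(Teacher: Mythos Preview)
Your first three paragraphs are essentially the paper's proof. The paper likewise uses the preceding Proposition plus Frobenius reciprocity to produce a non-zero $G$-map---in the paper's version, from $\textnormal{ind}^G_K\sigma^s$ to $\textnormal{ind}^G_K\sigma/(T)$, sending $f_{0,\sigma^s}\mapsto\overline{f_{1,\sigma}}$, with an exchange $\sigma\leftrightarrow\sigma^s$ at the end; your direct construction out of $\textnormal{ind}^G_K\sigma$ simply absorbs that exchange---then tracks $f_1=S_-f_0$ through the map using Proposition~\ref{the image of I_1 under S_K and S_-}(1), and finishes with Lemma~\ref{f_1 generate the whole rep}.

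Two remarks. First, a slip: the inclusion supplied by the preceding Proposition applied to $\sigma^s$ is $\sigma=(\sigma^s)^s\hookrightarrow\textnormal{ind}^G_K\sigma^s/(T)$, not $\sigma^s\hookrightarrow\cdots$; the former is what Frobenius reciprocity needs in order to yield a map out of $\textnormal{ind}^G_K\sigma$. Second, the ``obstacle'' you flag is not there. The proof of the preceding Proposition is explicit: $\langle K\cdot\overline{f_{1,\sigma^s}}\rangle\cong\sigma$ with $\overline{f_{1,\sigma^s}}$ as its $I_{1,K}$-fixed line, so your map sends $f_{0,\sigma}$ precisely to $\overline{f_{1,\sigma^s}}$, and the only remaining computation is $S_-f_{1,\sigma^s}=c_-f_{1,\sigma^s}$ with $c_-=\sum_{(x,t)\in L^\times_{q^{3}}}\chi_{\sigma^s}(h(t))=0$ under the hypothesis $\chi_{\sigma^s}\neq\chi_{\sigma^s}^s$. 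The paper asserts this vanishing in exactly the same breath you do (``the constant $c$ is zero due to the assumption $\chi^s_\sigma\neq\chi_\sigma$''). No analogue of the Section~3 Hom-space analysis is required, and your alternative pro-$p$-Iwahori--Hecke route is unnecessary.
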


\begin{proof}
By Frobenious reciprocity, there is a non-zero $G$-map from the maximal compact induction $\text{ind}^G _K \sigma^s$ to $\text{ind}^G _K \sigma /(T)$, sending the function $f_{0, \sigma^s}$ to $\overline{f_{1, \sigma}}$. Then we see the map sends the function $f_{1, \sigma^s}= S_- f_{0, \sigma^s}$ to $\overline{S_- f_{1, \sigma}}= \overline{c\cdot f_{1, \sigma}}$. However the constant $c$ is zero due to the assumption $\chi^s _\sigma \neq\chi_\sigma$. We conclude that (exchanging $\sigma$ with $\sigma^s$) the function $f_{1, \sigma}$ does not generate $\textnormal{ind}^G _K \sigma$. By Lemma \ref{f_1 generate the whole rep}, the assertion follows.
\end{proof}

\begin{remark}
 We deduce a non-zero $G$-map from $\textnormal{ind}^G _K \sigma /(T)$ to $\textnormal{ind}^G _K \sigma^s /(T)$ and an analogous map of the other direction. However, unlike what happens in the case of $GL_2 (F)$ for a totally ramified extension $F/\mathbf{Q}_p$(\cite[Corollary 2.17]{Sch14}), these maps are not isomorphisms. Indeed, one may check the composition of these maps is zero.
\end{remark}

\begin{remark}
We expect the main result still holds without any restriction on the group $K$ and the size of the residue field $k_F$. However, the strategy we employ seems insufficient to handle the general case not covered by our main result, that is for a weight $\sigma$ of $K$ such that $\chi^s _\sigma \neq \chi_\sigma$, and $(K, q)$ is away from the situation of Corollary \ref{reducibility of supersingular quotient: the case K=K_1 and q=p}. In this case, one might hope to give a straightforward proof that the function $f_1$ does not generate the whole representation $\textnormal{ind}^G _K \sigma$, then one can conclude by Lemma \ref{f_1 generate the whole rep}. One implicit subtlety here is the function $f_1$ would not vanish in any non-supersingular quotient of $\textnormal{ind}^G _K \sigma$, but we have very limited ways to construct a \textbf{non-zero} quotient of the universal supersingular quotient.
\end{remark}

\section{Appendix: the $I_{1, K}$-invariant maps $S_K$ and $S_-$}\label{subsec: S_K and S_-}\label{sec: appendix}
\emph{This part is reproduced from \cite{X2017}.}

\subsection{Definition of $S_K$ and $S_-$}
\emph{In this section, we study some partial linear operators on a smooth representation $\pi$, especially about their certain invariant properties.}

\begin{definition}
Let $\pi$ be a smooth representation of $G$. We define:
\begin{center}
$S_K: \pi^{N'_{m_K}} \rightarrow \pi^{N_{n_K}}$,

$v \mapsto \sum_{u \in N_{n_K}/N_{n_K +1}}~u \beta_K v$.
\end{center}

\begin{center}
$S_-: \pi^{N_{n_K}} \rightarrow \pi^{N'_{m_K}}$,

$v\mapsto \sum_{u'\in N' _{m_K}/N' _{m_K +1}}~u'\beta_K\alpha^{-1}v$
\end{center}

\end{definition}

It is simple to check both $S_K$ and $S_-$ are well-defined.  We summarize the main properties of $S_K$ and $S_-$ as follows:
\begin{proposition}\label{S_K and S_- preserve I_1}
We have:

$(1)$.~~Let $h\in H_1 =I_{1, K}\cap H$. Then $S_K (hv)= h^s\cdot S_K v$, for $v\in \pi^{N'_{m_K}}$, and $S_- (hv)= h^s\cdot S_- v$, , for $v\in \pi^{N_{n_K}}$, where $h^s$ is short for $\beta_K h \beta_K$.

$(2)$.~~If $v$ is fixed by $I_{1, K}$, the same is true for $S_K \cdot v$ and $S_- \cdot v$.
\end{proposition}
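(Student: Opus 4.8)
The plan is to deduce both statements from the Iwahori factorization $I_{1,K}=N_{n_K}\cdot H_1\cdot N'_{m_K}$ (which is valid in any order of the three factors) together with the single identity $\beta_K^2=Id$, giving $\beta_K^{-1}=\beta_K$ and $h^s=\beta_K h\beta_K$. First I would record two preliminary facts. One, $\beta_K$ normalizes $H$ and preserves $H_1$: indeed $\beta h(a)\beta=h(\bar a^{-1})$, and since $\alpha$ is diagonal the same holds for $\beta'=\alpha\beta=\beta\alpha^{-1}$, so $h\mapsto h^s$ is an involution of $H_1$. Two, for $h=h(a)\in H_1$, conjugation by $h$ (equivalently by $h^s$) sends $n(x,y)$ to $n(-a^2\bar a^{-1}x,\,a\bar a\,y)$ and $n'(x,y)$ to an element with $y$-coordinate $(a\bar a)^{-1}y$; as $a\bar a$ is a unit, this stabilizes each of $N_{n_K},N_{n_K+1},N'_{m_K},N'_{m_K+1}$ and induces permutations of the finite coset spaces $N_{n_K}/N_{n_K+1}$ and $N'_{m_K}/N'_{m_K+1}$.

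For part $(1)$ I would start from $\beta_K h=h^s\beta_K$ (immediate from $\beta_K^2=Id$) and rewrite each summand of $S_K(hv)$ as $u\beta_K h v=u h^s\beta_K v=h^s\big((h^s)^{-1}u h^s\big)\beta_K v$. Setting $u'=(h^s)^{-1}u h^s$ and using that $u\mapsto u'$ permutes $N_{n_K}/N_{n_K+1}$, the sum reindexes to $h^s\sum_{u'}u'\beta_K v=h^s S_K v$. The statement for $S_-$ is identical once one observes that $\alpha$ and $h$ are both diagonal, so $\beta_K\alpha^{-1}h=\beta_K h\alpha^{-1}=h^s\beta_K\alpha^{-1}$, after which the same reindexing is carried out over $N'_{m_K}/N'_{m_K+1}$.

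For part $(2)$ assume $v\in\pi^{I_{1,K}}$. By the Iwahori factorization it suffices to check that $S_K v$ is fixed separately by $N_{n_K}$, by $H_1$, and by $N'_{m_K}$ (and dually for $S_-$). Invariance under $N_{n_K}$ is built into the codomain of $S_K$, i.e.\ it is exactly the well-definedness already granted. Invariance under $H_1$ follows from part $(1)$: since $hv=v$ we get $S_K v=S_K(hv)=h^s S_K v$, and as $h$ ranges over $H_1$ so does $h^s$, so $S_K v$ is $H_1$-fixed.

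The remaining and genuinely substantive point is invariance of $S_K v$ under the opposite unipotent $N'_{m_K}$ (and of $S_- v$ under $N_{n_K}$). Here I would fix $n'_0\in N'_{m_K}$ and study $n'_0 S_K v=\sum_u n'_0 u\beta_K v$. The plan is to use the Bruhat-type identity \eqref{useful identity}, which writes $\beta_K n(x,y)$ explicitly inside $NHN'$ for $y\neq 0$, in order to re-express each product so that $n'_0 u\beta_K v=n_1\cdot h_1\cdot(n'_1\beta_K v)$ with $n_1\in N_{n_K}$ congruent to $u$ modulo $N_{n_K+1}$, with $h_1\in H_1$, and with $n'_1$ in a unipotent subgroup that fixes $\beta_K v$; the $H_1$- and $N'$-parts then disappear against the invariance of $v$, and the change of variables $u\mapsto n_1$, being trivial on $N_{n_K}/N_{n_K+1}$, returns the original sum. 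I expect this to be the main obstacle: it is the only step that uses the \emph{full} $I_{1,K}$-invariance of $v$ rather than its $N'_{m_K}$-invariance alone, and it is where the precise values of $n_K,m_K$ and the explicit shape of \eqref{useful identity} must be tracked carefully to confirm that the auxiliary factors really land in the claimed subgroups. Once this is settled for $S_K$, the statement for $S_-$ follows by the same computation with $\beta_K$ replaced by $\beta_K\alpha^{-1}$ and the roles of $N$ and $N'$ interchanged.
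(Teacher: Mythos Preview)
Your outline is correct and matches the paper's argument almost exactly. Part~(1) is just as you say. For part~(2) the paper does precisely what you propose: reduce via the Iwahori factorization to checking $N'_{m_K}$-invariance of $S_K v$, then for fixed $n'_0\in N'_{m_K}$ write $n'_0 u = u_1\, h\, u'_1$ with $u_1\in N_{n_K}$, $h\in H_1$, $u'_1\in N'_{m_K}$, observe that $\beta_K h u'_1\beta_K\in I_{1,K}$ so it fixes $v$, and verify that $u\mapsto u_1$ permutes $N_{n_K}/N_{n_K+1}$. The paper isolates this last decomposition as a separate ``exchange lemma'' and proves both the existence of the factorization and the bijectivity on cosets by an explicit $3\times 3$ matrix computation; your stronger claim that $u_1\equiv u\pmod{N_{n_K+1}}$ is in fact what drops out of those formulas.

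One correction: the identity \eqref{useful identity} that you invoke is not the right tool here and will not yield the factorization you need. That identity decomposes $\beta\, n(x,y)$ inside the big Bruhat cell $NHN'$ and requires $y\neq 0$; what the argument actually uses is the purely Iwahori-level inclusion $N'_{m_K}\,N_{n_K}\subset N_{n_K}\,H_1\,N'_{m_K}$ together with the induced bijection on $N_{n_K}/N_{n_K+1}$, and this is obtained by a direct computation with the entries of $n'(x,y)\,n(x_1,y_1)$ rather than by any appeal to \eqref{useful identity}. Once you replace that citation with the straightforward matrix calculation, your sketch is complete.
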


\begin{proof}
For $(1)$, we note that the group $H_1$ acts on $\pi^{N_{n_K}}$ and $\pi^{N'_{m_K}}$, as it normalizes $N_{n_K}$ and $N'_{m_K}$. The statement then follows from the definitions.

For $(2)$, we need some preparation, and we sort them out as two lemmas:

\begin{lemma}\label{exchange lemma}
For $u'\in N'_{m_K}, u\in N_{n_K}$, we have:

$(1)$. ~The following identity
\begin{center}
$u'u= u_1 h u'_1$
\end{center}
holds for a unique $u_1\in N_{n_K}, h\in H_1, u'_1 \in N'_{m_K}$.

$(2)$. When $u$ goes through $N_{n_K +l} / N_{n_K +m}$, the element $u_1$ also goes through $N_{n_K +l}/ N_{n_K +m}$, for any $m > l\geq 0$.
\end{lemma}

\begin{proof}
The uniqueness statement is clear, and only the existence needs to be proved.

Assume $u =n(x_1, y_1) \in N, u' \in n'(x, y) \in N'$. Then, if $1+x x_1 + \overline{y y_1}\in E^\times$, we have
 \begin{center}
 $u'u= u_1 h u'_1$,
\end{center}
where $h u'_1$ is the following lower triangular matrix:
\begin{center}
$\begin{pmatrix}  \frac{1}{1+x x_1 + \overline{y y_1}} & 0 & 0  \\ \frac{x-\overline{x_1 y}}{1+\overline{x x_1} +y y_1 } & \frac{1+x x_1+ \overline{y y_1}}{1+ \overline{x x_1} +y y_1} & 0\\
y & yx_1-\bar{x} & 1+ \overline{x x_1}+ y y_1
\end{pmatrix},$
\end{center}
and $u_1= n(x_2, y_2) \in N$, in which $x_2, y_2$ are given by:
\begin{center}
$x_2= \frac{x_1- \overline{y_1 x}}{1+x x_1+\overline{y y_1}}, y_2= \frac{y_1}{1+\overline{x x_1}+y y_1}.$
\end{center}

Under our assumption that $u'\in N'_{m_K}$ and $u\in N_{n_K}$, the condition $1+x x_1 + \overline{y y_1}\in 1+ \mathfrak{p}_E \subset E^\times$ holds automatically. The existence is established.

\medskip
We continue to prove $(2)$. We start by the following observation: from the formula of $y_2$ given in the argument of $(1)$, we see
\begin{center}
$y_2 =y_1 +$ higher valuation terms,
\end{center}
as $u=n(x_1, y_1)\in N_{n_K}, u'= n'(x, y)\in N'_{m_K}$. That is to say:
 \begin{center}
 $u\in N_{n_K +m} \Leftrightarrow u_1\in N_{n_K +m}, \forall m\geq 0$.
 \end{center}

Assume now for another $w\in N_{n_K}$, we have a decomposition $u'w= u_2 b''$ for $u_2\in N_{n_K}$ and $b''\in B'$. We have to prove:
\begin{center}
 $u_2\in u_1 N_{n_K +m}$ implies $w\in u N_{n_K +m}$.
\end{center}
 Write $u^{-1}_1 u_2$ as $u_3$. A little algebraic transform gives:
\begin{center}
$w= u \cdot b'^{-1}u_3 b''$
\end{center}
We need to check that the element $b'^{-1}u_3 b'' \in N_{n_K}$, denoted by $u_4$, lies in $N_{n_K +m}$. The element $b'$ can be written as $h\cdot u'_1$, for a diagonal matrix $h\in H_1$ and $u'_1 \in N'_{m_K}$. We therefore get
\begin{center}
$u'_1 u_4= (h^{-1}u_3 h)\cdot h^{-1}b''$,
\end{center}
where the right hand side is a decomposition of $u'_1 u_4$ given in $(1)$. The uniqueness of such a decomposition implies our observation at the beginning can be applied: we have
$u_4\in N_{n_K +m}$ if and only if $h^{-1}u_3 h\in N_{n_K +m}$ for any $m\geq 0$. Our assumption is that $u_3= u^{-1}_1 u_2\in N_{n_K +m}$, which is the same as $h^{-1}u_3 h\in N_{n_K +m}$ ($h\in H_1$). We are done.
\end{proof}

\begin{remark}\label{refinement of exchange lemme}
A slight variant of $(2)$ holds by the same argument. When $u$ goes through $N_{n_K +l}\setminus N_{n_K +n} / N_{n_K +m}$, the element $u_1$ also goes through $N_{n_K +l}\setminus N_{n_K +n}/ N_{n_K +m}$, for any $m \geq n > l\geq 0$.
\end{remark}
\begin{lemma}\label{exchange lemma 2}
For a $u'\in N'_{m_K}, u\in N_{n_K}$, we have

$(1)$.~The following identity
\begin{center}
$u u'= u'_1 h u_1$
\end{center}
holds for a unique $u'_1 \in N'_{m_K}, h\in H_1, u_1\in N_{n_K}$.

$(2)$.~When $u'$ goes through $N'_{m_K+ l}/ N'_{m_K +m}$, the element $u'_1$ also goes through $N'_{m_K +l}/ N'_{m_K +m}$, for any $m >l \geq 1$.
\end{lemma}

\begin{proof}
The argument of Lemma \ref{exchange lemma} can be slightly modified to work for the current case.
\end{proof}

\smallskip
We proceed to complete the argument of $(2)$ of the Proposition.

By $(1)$ and the decomposition of $I_{1, K}= N'_{m_K} \times H_1 \times N_{n_K}$, it suffices to check that, for $u'= n'(x, y)\in N'_{m_K}$, the element $u'\cdot S_K v$
\begin{center}
$u'\cdot S_K v= \sum_{u\in N_{n_K}/N_{n_K +1}}~u'u\beta_K v$
\end{center}
is still equal to $S_K v=\sum_{u\in N_{n_K}/N_{n_K +1}}~u\beta_K v$. By $(1)$ of Lemma \ref{exchange lemma}, the right hand side of above sum is equal to:
\begin{center}
$\sum_{u\in N_{n_K}/N_{n_K +1}}~u_1 h u'_1 \beta_K v$.
\end{center}
 We get:
\begin{center}
$u'\cdot S_K v=\sum_{u\in N_{n_K}/N_{n_K +1}}~u_1\beta_K (\beta_K h u'_1 \beta_K) v =\sum_{u\in N_{n_K}/N_{n_K +1}}~u_1\beta_K v$,
\end{center}
which, by $(2)$ of Lemma \ref{exchange lemma}, is just $\sum_{u_1\in N_{n_K}/N_{n_K +1}}~u_1\beta_K v$. The argument for the statement $S_K v \in \pi^{I_{1, K}}$ for $v\in \pi^{I_{1, K}}$ is complete now.

\medskip
Using Lemma \ref{exchange lemma 2}, the previous argument can be slightly modified to show that $S_- v \in \pi^{I_{1, K}}$ for $v\in \pi^{I_{1, K}}$. The argument of the Proposition is done.
\end{proof}

\subsection{The images of $(\textnormal{ind}_K ^G \sigma)^{I_{1, K}}$ under $S_K$ and $S_-$}\label{sec: the image of pro-p under two operators}

\begin{proposition}\label{the image of I_1 under S_K and S_-}
We have:

$(1)$.~For $n\geq 1$,
\begin{center}
$S_K f_n = f_{-n}$, ~$S_- f_n = c_- f_n$.
\end{center}
Here, the constant $c_-$ is given by:
\begin{center}
$c_-= \sum_{(x, t)\in L^\times _{q^{4-t_K}}}\chi_\sigma (h(t))$.
\end{center}

$(2)$.~For $n \geq 0$,
\begin{center}
$S_K f_{-n}= d_n f_{-n}$, ~$S_- f_{-n}= f_{n+1}$.
\end{center}
Here, the constant $d_n$ $(n\geq 1)$ is given by:
\begin{center}
$d_n =\sum_{(x, t)\in L^\times _{q^{t_K}}} \chi_\sigma ((h(t))$;
\end{center}
and the constant $d_0$ is equal to:
\begin{center}
$d_0= \begin{cases}
-\chi_\sigma (h(\mathfrak{t})), ~~\text{if}~\sigma\cong~\text{a twist of the Steinberg weight}
;\\
0, ~~~~~~~~~~~~~~~~~~\text{otherwise}.\end{cases}$
\end{center}
\end{proposition}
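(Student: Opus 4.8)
The plan is to prove all four displayed formulas by direct computation in the compact-induction model, writing each $f_n$ as an explicit finite sum of standard functions $[g,v_0]$ and then unwinding the definitions of $S_K$ and $S_-$. First I would record the explicit shapes of the $f_n$: for $n\geq 1$ one has $f_n=\sum_{u}[\beta_K u\alpha^{-n},v_0]$ with $u$ ranging over a suitable coset system in $N$, while for $n\leq 0$ one has $f_n=\sum_{v}[v\alpha^{-n},v_0]$; the index sets are governed by the fact that consecutive quotients $N_j/N_{j+1}$ alternate in size between $q^{t_K}$ and $q^{4-t_K}$ (and dually for $N'$), since $\lvert N_{n_K}/N_{n_K+2}\rvert=q^4$. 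I would also isolate the three group-theoretic identities that drive everything: $\beta_K^2=\mathrm{Id}$, $\beta_K\alpha^{-1}\beta_K=\alpha$, and the conjugation rules $\beta_K N_j\beta_K=N'_j$ together with $\alpha^{\mp1}N_j\alpha^{\pm1}=N_{j\pm 2}$.

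With these in hand, the two ``contracting'' identities $S_Kf_n=f_{-n}$ ($n\geq1$) and $S_-f_{-n}=f_{n+1}$ ($n\geq0$) are the clean cases. Here applying $\beta_K$ (resp.\ $\beta_K\alpha^{-1}$) to $f_n$ lets one collapse the Weyl factors using $\beta_K^2=\mathrm{Id}$ and $\beta_K\alpha^{-1}\beta_K=\alpha$, after which the double sum over $N_{n_K}/N_{n_K+1}$ and the internal coset system telescopes --- via Lemma \ref{exchange lemma} and the swap $N\leftrightarrow N'$ --- into the single sum defining the target function. No diagonal torus elements appear, so the scalar is exactly $1$; the only subtlety is checking that the alternating orders $q^{t_K},q^{4-t_K}$ make the source and target index sets match in size.

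The two ``returning'' identities $S_-f_n=c_-f_n$ ($n\geq1$) and $S_Kf_{-n}=d_nf_{-n}$ ($n\geq1$) are where the identity \eqref{useful identity} becomes essential. In both, collapsing the Weyl factors leaves terms containing a ``large'' unipotent $n(x,y)$ produced by $\alpha^{\pm}$-conjugation (appearing explicitly as $\beta_K n(x,y)$ in the $S_K$ case). For the generic part $y\neq0$ I would invoke $\beta_K n(x,y)=n(\bar y^{-1}x,y^{-1})\,h(\bar y^{-1})\,n'(-\bar y^{-1}\bar x,y^{-1})$ to trade the large unipotent for a reduced one, a diagonal element, and a lower-triangular tail. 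The $n'(\cdot)$-tail becomes integral after $\alpha^{-n}$-conjugation and is absorbed by the $I_{1,K}$-invariance of $v_0$; the diagonal factor $h(\bar y^{-1})\in H_0$, pushed onto $v_0$ through $[gh,v_0]=\chi_\sigma(h)\,[g,v_0]$, produces the character value $\chi_\sigma(h(\cdot))$. Summing over the $y\neq0$ (i.e.\ $L^\times$) part of the relevant quotient ($N'_{m_K}/N'_{m_K+1}\cong L_{q^{4-t_K}}$ for $S_-$, and $N_{n_K}/N_{n_K+1}\cong L_{q^{t_K}}$ for $S_K$) and reindexing the residual unipotent by Lemma \ref{exchange lemma} and Remark \ref{refinement of exchange lemme} then rebuilds $f_n$ (resp.\ $f_{-n}$) with scalar $c_-$ (resp.\ $d_n$); the finitely many degenerate terms with $y=0$ must be tracked separately and shown to feed back consistently.

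The main obstacle is the boundary value $d_0$, i.e.\ the computation of $S_Kf_0$. Since $f_0=[\mathrm{Id},v_0]$ carries no $\alpha$-shift and no unipotent to cross, here $S_Kf_0=[\mathrm{Id},\big(\sum_{u\in\mathbb U}\sigma(u)\big)\sigma(\beta_K)v_0]$ reduces to the purely finite-group question of evaluating the Gauss-sum operator $\sum_{u\in\mathbb U}\sigma(u)$ on $\sigma(\beta_K)v_0$; as this lands in the one-dimensional line $\sigma^{I_{1,K}}=\sigma^{\mathbb U}=\langle v_0\rangle$, it acts by a scalar $d_0$, and the content is to show this scalar equals $-\chi_\sigma(h(\mathfrak t))$ precisely when $\sigma$ is a twist of the Steinberg weight and vanishes otherwise. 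I expect this finite $\Gamma_K$-representation-theoretic input --- reading off the Gauss sum from the structure of $\mathrm{Ind}^{\Gamma_K}_{\mathbb B}\chi_\sigma$ and the distinguished behaviour of its Steinberg constituent --- together with the uniform bookkeeping of the alternating orders $q^{t_K},q^{4-t_K}$ throughout, to be the delicate part of the argument.
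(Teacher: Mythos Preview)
Your approach is sound in outline but genuinely different from the paper's. The paper does \emph{not} write the $f_n$ out as explicit sums $\sum[g,v_0]$ and manipulate termwise. Instead it first invokes Proposition~\ref{S_K and S_- preserve I_1} to know that $S_Kf_n$ and $S_-f_n$ are again $I_{1,K}$-invariant, hence lie in the span of the basis $\{f_k\}$; it then bounds the support of $S_Kf_n$ (resp.\ $S_-f_n$) by elementary double-coset containments to pin down \emph{which} $f_k$ can occur (at most two), and finally evaluates the function at one or two points $\alpha^{\pm n}$, $\alpha^{n-1}$ to read off the coefficients. In the ``returning'' cases the identity~\eqref{useful identity} is applied only at the evaluation step, not to rewrite a whole sum; and the vanishing of the spurious component (your ``degenerate $y=0$ terms'') falls out in one line as $\sum_{u}v_0=q^{\ast}\cdot v_0=0$ in characteristic $p$. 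Your direct-sum manipulation should also succeed, but it forces you to track the full index sets carefully (note incidentally that $\beta_K N_j\beta_K=N'_j$ is literally correct only for $K=K_0$; for $K=K_1$ there is a shift by $2$, though the instance $\beta_K N_{n_K}\beta_K=N'_{m_K}$ you actually need does hold), and your treatment of the degenerate terms is left at ``must be tracked separately and shown to feed back consistently'', which is exactly where the paper's support-plus-evaluation method is cleanest. For the boundary constant $d_0$ both approaches agree: the paper reduces it to the value of the finite Hecke operator $T_{\beta_K}$ on the simple $\mathcal{H}_{\Gamma_K}$-module $\sigma^{\mathbb U}$ and quotes the tables in \cite{Karol-Peng2012}, which is precisely the ``Gauss-sum on $\sigma(\beta_K)v_0$'' computation you anticipate.
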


\begin{proof}
We will prove $S_K f_n = f_{-n}$ for $n\geq 1$ and $S_- f_{-n}= f_{n+1}$ for $n \geq 0$ at first.

For $n\geq 1$, the support of the function $S_K f_n$ is contained in:
\begin{center}
$K\alpha^{-n}I_{1, K}\beta_K N_{n_K} =K\alpha^n I_{1, K}$.
\end{center}
Then, by Proposition \ref{S_K and S_- preserve I_1} and \cite[Remark 3.8]{X2019}, the function $S_K f_n$ is proportional to $f_{-n}$. We compute:
\begin{center}
$S_K f_n (\alpha^n)= \sum_{u \in N_{n_K}/N_{n_K +1}}~f_n(\alpha^n u\beta_K)= f_n (\alpha^n \beta_K)= v_0$,
\end{center}
where we note that $\alpha^n u\beta_K \in K\alpha^n I_{1, K}$, for $u\in N_{n_K}\setminus N_{n_K +1}$ (\cite[(3) of Proposition 6.1]{X2018}). Hence, we have proved $S_K f_n= f_{-n} $ for $n\geq 1$.

\smallskip
For $n\geq 0$, the support of the function $S_- f_{-n}$ is contained in
\begin{center}
$K\alpha^n I_{1, K}\beta_K \alpha^{-1}N'_{m_K}= K\alpha^{-(n+1)}I_{1, K}$.
\end{center}
 By Proposition \ref{S_K and S_- preserve I_1} and \cite[Remark 3.8]{X2019} again, the function $S_- f_{-n}$ is proportional to $f_{n+1}$. We compute:
\begin{center}
$S_- f_{-n} (\alpha^{-(n+1)})= \sum_{u' \in N'_{m_K}/N'_{m_K +1}}~f_{-n} (\alpha^{-(n+1)}u'\alpha\beta_K)= \beta_K v_0$,
\end{center}
where we note that $\alpha^{-(n+1)}u'\alpha\beta_K \in K\alpha^{n+1} K$, for $u'\in N'_{m_K} \setminus N'_{m_K +1}$ (\cite[(3) of Proposition 6.1]{X2018}). Thus, we have verified $S_- f_{-n}= f_{n+1}$, for $n\geq 0$.

\smallskip

We proceed to prove $S_- f_n =c_- f_n$ for $n\geq 1$: we will determine the value of $c_-$ explicitly. The support of the function $S_- f_n$ is contained in
\begin{center}
$K\alpha^{-n}I_{1, K} \alpha \beta_K N'_{m_K} \subseteq K \alpha^{n-1} I_{1, K} \cup K \alpha^n I_{1, K}$,
\end{center}
where the inclusion follows from \cite[(1) and (3) of Proposition 6.1]{X2018}. We conclude that $S_- f_n \in \langle f_{-(n-1)}, f_n\rangle$ by Proposition \ref{S_K and S_- preserve I_1} and \cite[Remark 3.8]{X2019}. We compute:
\begin{center}
$S_- f_n (\alpha^{n-1})=\sum_{u'\in N' _{m_K}/N' _{m_K +1}} f_n (\alpha^{n-1}u'\alpha \beta_K)= \sum_{u'\in N' _{m_K}/N' _{m_K +1}} v_0 =0.$
\end{center}
It remains to compute $S_- f_n (\alpha^{-n})$:
\begin{center}
$S_- f_n (\alpha^{-n})=\sum_{u'\in N' _{m_K}/N' _{m_K +1}} f_n (\alpha^{-n} u'\alpha \beta_K)$.
\end{center}
Note that $\alpha^{-n} u'\alpha \beta_K \in K\alpha^{n-1}I_{1, K}$ for $u'\in N' _{m_K +1}$, and we are reduced to
\begin{center}
$S_- f_n (\alpha^{-n})=\sum_{u'\in (N' _{m_K}\setminus N' _{m_K +1})/N' _{m_K +1}} f_n (\alpha^{-n} u'\alpha \beta_K)$
\end{center}
For a $u' =n'(\ast, \varpi^{m_K} _E t)$ for some $t\in \mathfrak{o}^\times _E$, we have (using \eqref{useful identity})
\begin{center}
$\alpha^{-n} u'\alpha \beta_K =n (\ast, \varpi^{2n-1+n_K} _E t^{-1})h(\bar{t}^{-1})\alpha^{-n}n'(\ast, \varpi^{m_K} _E t^{-1})$.
\end{center}
Thus, we immediately get:
\begin{center}
$S_- f_n (\alpha^{-n})=(\sum_{(x, t)\in L^\times _{q^{4-t_K}}}\chi_\sigma (h(t))) \beta_K v_0$,
\end{center}
here we have identified the group $N' _{m_K}/N' _{m_K +1}$ with $L _{q^{4-t_K}}$, via the map $L_{m_K}$.

Hence, we get
\begin{center}
$c_-= \sum_{(x, t)\in L^\times _{q^{4-t_K}}}\chi_\sigma (h(t))$.
\end{center}

\smallskip
We move to deal with the last statement: $S_K f_{-n}= d_n f_{-n}$, for $n\geq 0$. The support of the function $S_K f_{-n}$ is contained in
\begin{center}
$K\alpha^n I_{1, K}\beta_K N_{n_K} \subseteq K \alpha^n K$
\end{center}
By \cite[Remark 3.8]{X2019}, we get:

when $n=0$, $S_K f_0 \in \langle f_0\rangle$;

when $n >0$, $S_K f_{-n} \in \langle f_{-n}, f_n\rangle$.

We consider the second case at first. Assume $n >0$. We compute:
\begin{center}
$S_K f_{-n} (\alpha^{-n})= \sum_{u \in N_{n_K}/N_{n_K +1}}~f_{-n} (\alpha^{-n}u\beta_K)=\sum_{u \in N_{n_K}/N_{n_K +1}} \beta_K v_0= 0$.
\end{center}

Next, we compute $S_K f_{-n} (\alpha^n)$:
\begin{center}
$S_K f_{-n} (\alpha^n)= \sum_{u \in N_{n_K}/N_{n_K +1}}~f_{-n} (\alpha^n u\beta_K)$.
\end{center}
Note that $\alpha^n u\beta_K \in K\alpha^{-n}N'_{m_K}$, for $u\in N_{n_K +1}$. We are thus reduced to:
\begin{center}
$S_K f_{-n} (\alpha^n)= \sum_{u \in (N_{n_K}\setminus N_{n_K +1})/N_{n_K +1}}~f_{-n} (\alpha^n u\beta_K)$.
\end{center}
For $u= n(\ast, \varpi^{n_K} _E t)$, for some $t\in \mathfrak{o}^\times _E$, we have (using \eqref{useful identity}):
\begin{center}
$\alpha^n u\beta_K= n'(\ast, \varpi^{2n-1+m_K} _E t^{-1})h(t)\alpha^n n(\ast, \varpi^{n_K} _E t^{-1})$.
\end{center}
Thus, we get
\begin{center}
$S_K f_{-n} (\alpha^n)=(\sum_{(x, t)\in L^\times _{q^{t_K}}} \chi_\sigma ((h(t))) v_0$,
\end{center}
here we have identified the group $N_{n_K}/N_{n_K +1}$ with $L _{q^{t_K}}$, via the map $L_{n_K}$. Hence, we get:
\begin{center}
$d_n =\sum_{(x, t)\in L^\times _{q^{t_K}}} \chi_\sigma ((h(t))$
\end{center}

\begin{remark}
The exact values of $c_-$ and $d_n$ ($n\geq 1$) depend on the nature of the character $\chi_\sigma$, and they have been computed explicitly in \cite[Appendix A]{Karol-Peng2012}.
\end{remark}

\medskip
We still need to compute the constant $d_0$ appearing in $S_K f_0 =d_0 f_0$. By definition, the constant $d_0$ is determined by
\begin{equation}\label{equation determines d_0}
\sum_{u \in N_{n_K}/N_{n_K +1}} u\beta_K v_0 =d_0 v_0.
\end{equation}

\medskip
We recall some stuff from \cite[section 5]{Karol-Peng2012}:

$1)$.~(Definition 5.2 of \emph{loc.cit})

To any character $\chi$ of $H_0 /H_1$, a subset $J_K (\chi)\subset \{s\}$ is attached.

$2)$.~(Definition 5.3 of \emph{loc.cit})

For any subset $J \subset J_K (\chi)$, one defines a character $M_{\chi, J}$ of the finite Hecke algebra $\mathcal{H}_{\Gamma_K} := \text{End}_{\Gamma_K} (\text{Ind}^{\Gamma_K} _\mathbb{U} 1)$.

$3)$.~(Proposition 5.4 of \emph{loc.cit})

Every simple module of the algebra $\mathcal{H}_{\Gamma_K}$ is isomorphic to $M_{\chi, J}$ for some character $\chi$ of $H_0 /H_1$ and some $J\subset J_K (\chi)$.

$4)$.~(Proposition 5.5 of \emph{loc.cit})

The functor $\sigma \rightarrow \sigma^{\mathbb{U}}$ gives a bijection between the set of isomorphism classes of irreducible representations of $\Gamma_K$ and the set of isomorphism classes of simple right $\mathcal{H}_{\Gamma_K}$-modules.

\medskip

By $4)$ above, we write our $\sigma$ as $\sigma_{\chi_\sigma, J}$ such that:
\begin{center}
 $\sigma^\mathbb{U} \cong M_{\chi_\sigma, J}$,
 \end{center}
for some $J\subset J_K (\chi_\sigma)$. Then, by comparing \eqref{equation determines d_0} and the right action of $\mathcal{H}_{\Gamma_K}$ on $\sigma^{\mathbb{U}}$(\cite[3.1, (1)]{Karol-Peng2012}), we see immediately that
\begin{center}
$d_0= M_{\chi_\sigma, J} (T_{\beta_K})$,
\end{center}
where $T_{\beta_K}$ is the Hecke operator in $\mathcal{H}_{\Gamma_K}\hookrightarrow \mathcal{H} (I_{1, K}, 1)$ which corresponds to the double coset $I_{1, K}\beta_K I_{1, K}$. By the identification in \cite[Proposition 5.7]{Karol-Peng2012}, our statement for the value of $d_0$ now follows from the lists in Definition 5.3 of \emph{loc.cit}:
\begin{center}
$d_0= \begin{cases}
-\chi_\sigma (h(\mathfrak{t})), ~~\text{if}~\sigma\cong~\text{a twist of}~st
\\
0, ~~~~~~~~~~~~~~~~~~\text{otherwise}\end{cases}$
\end{center}
Here, we note that the element $\beta_K$ is different from a normalized one\footnote{i.e., an element of determinant $1$.} used in \cite{Karol-Peng2012} by exactly the diagonal matrix $h(\mathfrak{t})$.
\end{proof}
\smallskip

\bibliographystyle{amsalpha}
\bibliography{new}

\providecommand{\bysame}{\leavevmode\hbox to3em{\hrulefill}\thinspace}
\providecommand{\MR}{\relax\ifhmode\unskip\space\fi MR }
\providecommand{\MRhref}[2]{%
  \href{http://www.ams.org/mathscinet-getitem?mr=#1}{#2}
}
\providecommand{\href}[2]{#2}
\begin{thebibliography}{AHHV17}

\bibitem[AHHV17]{AHHV17a}
Noriyuki Abe, Guy Henniart, Florian Herzig, and Marie-France Vign\'eras,
  \emph{A classification of irreducible admissible {${\rm mod}\, p$}
  representations of {$p$}-adic reductive groups}, J. Amer. Math. Soc.
  \textbf{30} (2017), no.~2, 495--559. \MR{3600042}

\bibitem[BL94]{B-L94}
Laure Barthel and Ron Livn{\'e}, \emph{Irreducible modular representations of
  {${\rm GL}_2$} of a local field}, Duke Math. J. \textbf{75} (1994), no.~2,
  261--292. \MR{1290194 (95g:22030)}

\bibitem[CE04]{C-E2004}
Marc Cabanes and Michel Enguehard, \emph{Representation theory of finite
  reductive groups}, New Mathematical Monographs, vol.~1, Cambridge University
  Press, Cambridge, 2004. \MR{2057756 (2005g:20067)}

\bibitem[{Hen}19]{Yot19}
Yotam~I. {Hendel}, \emph{{On the universal mod \(p\) supersingular quotients
  for \(\mathrm{GL}_{2}(F)\) over \(\overline{\mathbb{F}}_p\) for a general
  \(F/\mathbb{Q}_p\)}}, {J. Algebra} \textbf{519} (2019), 1--38 (English).

\bibitem[Her11]{Her2011a}
Florian Herzig, \emph{A {S}atake isomorphism in characteristic {$p$}}, Compos.
  Math. \textbf{147} (2011), no.~1, 263--283. \MR{2771132}

\bibitem[Kis09]{Ki09}
Mark Kisin, \emph{The {F}ontaine-{M}azur conjecture for {${\rm GL}_2$}}, J.
  Amer. Math. Soc. \textbf{22} (2009), no.~3, 641--690. \MR{2505297}

\bibitem[KX15]{Karol-Peng2012}
Karol Kozio{\l} and Peng Xu, \emph{Hecke modules and supersingular
  representations of {${\rm U}(2,1)$}}, Representation Theory \textbf{19}
  (2015), 56--93. \MR{3321473}

\bibitem[Mor12]{Mor12}
Stefano Morra, \emph{On some representations of the {Iwahori} subgroup}, J.
  Number Theory \textbf{132} (2012), no.~5, 1074--1150 (English).

\bibitem[Mor13]{Mor13}
\bysame, \emph{Invariant elements for {{\(p\)}}-modular representations of
  {{\(\mathrm{GL}_2(\mathbb Q_p)\)}}}, Trans. Am. Math. Soc. \textbf{365}
  (2013), no.~12, 6625--6667 (English).

\bibitem[Sch14]{Sch14}
Michael~M. Schein, \emph{On the universal supersingular {${\rm mod}\,p$}
  representations of {${\rm GL}_2(F)$}}, J. Number Theory \textbf{141} (2014),
  242--277. \MR{3195399}

\bibitem[Xu17]{X2017}
Peng Xu, \emph{On certain {I}wahori representation of unramified {$\rm{U}(2,1)$
  in characteristic {$p$}}}, https://arxiv.org/abs/1706.02674, Preprint, 2017.

\bibitem[Xu18]{X2018}
\bysame, \emph{Hecke eigenvalues in {$p$}-modular representations of unramifed
  {${\rm U}(2,1)$}}, https://sites.google.com/view/xupeng2012, 2018.

\bibitem[Xu19]{X2019}
\bysame, \emph{Freeness of spherical {H}ecke modules of unramified {$U(2,1)$}
  in characteristic {$p$}}, J. Number Theory \textbf{195} (2019), 293--311.
  \MR{3867443}

\end{thebibliography}

\end{document}